\documentclass[11pt,arial]{article}

\usepackage{latexsym}
\usepackage{amsmath}
\usepackage{pstricks}
\usepackage{amssymb}


\usepackage{amsmath}
\usepackage{amsfonts}
\usepackage{amsthm,array} 
\usepackage{array} 
\usepackage{graphicx}
\usepackage{txfonts}
\usepackage{pxfonts}
\usepackage{multirow}  
\usepackage{float}  
\usepackage{longtable}
\usepackage{arydshln}
\usepackage{todonotes}

\usepackage[normalem]{ulem}

\def\br{{\mathbb{R}}}

\def\q{\quad}

\usepackage{color}

\renewcommand{\Re}{\operatorname{Re}}

\newtheorem{thm}{Theorem}[section]
\newtheorem{lem}[thm]{Lemma}

\newtheorem{proposition}[thm]{Proposition}
\newtheorem{Def}[thm]{Definition}
\newtheorem{rmk}[thm]{Remark}

\newtheorem{ex}[thm]{Example}

\newcommand{\END}{\hfill\mbox{\raggedright$\Diamond$}}

\def\lam{\lambda}

\def\a{\alpha}

\def\Del{\Delta}

\def\sign{\text{\,sign\,}}

\usepackage{xcolor}

\usepackage[normalem]{ulem}

\title{ Classification of Codimension-$1$ Singular Bifurcations in Low-dimensional DAEs 
} 
\author{  
I. Ovsyannikov\footnote{MARUM, University of Bremen, Germany. E-mail: 	ivan.i.ovsyannikov@gmail.com} 	\q and\q  H. Ruan\footnote{Technical University of Hamburg, Germany. E-mail: haibo.ruan@tuhh.de}
}

\date{\today}

\begin{document}

\maketitle
\begin{abstract}	
The study of bifurcations of differential-algebraic equations (DAEs) is the topic of interest for many applied sciences, such as electrical engineering, robotics, etc. While some of them were investigated already, the full classification of such bifurcations has not been done yet. 
In this paper, we consider bifurcations of quasilinear DAEs with a singularity and provide a full list of all codimension-one bifurcations in lower-dimensional cases. Among others, it includes
singularity-induced bifurcations (SIBs), which occur when an equilibrium branch intersects a singular manifold causing certain eigenvalues of the linearized problem to diverge to infinity. For these and other bifurcations, we construct the normal forms, establish the non-degeneracy conditions and give a qualitative description of the dynamics. Also, we study singular homoclinic and heteroclinic bifurcations, which were not considered before.
\end{abstract}
\section{Introduction} 

Differential-algebraic equations (DAEs) play an important role in dynamical system modeling such as power systems (cf. \cite{K_1994,A_2003}), nonlinear-circuits (\cite{S_1972,R_2008}), robotics (cf. \cite{BG_2018}), flight control systems (\cite{Venk_1995}), multi-body systems (\cite{EF_1998}), numeric PDEs (\cite{AS_2000} and references in  \cite{KM_2006}). 

We consider {\it quasilinear} DAEs of form
\begin{equation}\label{eq:qdae}
A(x,\a)\dot x=f(x, \a),\q (x,\a)\in \br^n\times \br^m,
\end{equation}
for smooth functions $A:\br^n\times \br^m\to \br^{n\times n}$, $f:\br^n\times \br^m\to \br^n$ of the phase variable $x\in \br^n$ and the parameter $\a\in\br^m$. 

In the presence of {\it singularities}, that is, points $(x,\a)$ such that $\det A(x,\a)$ in (\ref{eq:qdae}) vanishes, it is not possible to describe the local behavior of a DAE in terms of an explicit ODE. Regularization of such a singular DAE often leads to an ODE with higher dimensional manifolds of equilibria in phase space, which can manifest {\it bifurcations without parameters} (cf. \cite{Riaza_2018}). 

In parametrized problems, a stability change  due to the divergence of an eigenvalue was first analyzed by Venkatasubramanian \cite{Venk_1992} and later addressed by many others (\cite{Venk_1995, B_1998, B_2000, B_2001, Riaza_2002}). The change of stability, termed {\it singularity-induced bifurcations} (SIB), occurs when an equilibrium branch intersects a singular manifold, which results in the divergence of at least one eigenvalue through infinity. 

Main efforts in studying such singularity-crossing equilibria have been given by trying to characterize the SIBs in terms of the {\it linearized} problems, such as using the matrix pencils $\{A(x^*,\a^*),-D_xf(x^*,\a^*)\}$ associated to   (\ref{eq:qdae}) at the point of singularity $(x^*,\a^*)$ (cf. \cite{B_2001,Riaza_2010}). Different sufficient conditions have been given in the framework of the tractability index (cf. \cite{M_2002, T_2003}) and the geometric index (\cite{RR_2002,R_1995,Riaza_2010}) among others. However, they may not provide a {\it necessary and sufficient characterization} of the local flow around such singularity-crossing equilibria.

Besides SIBs, there can be other singular behavior induced by the presence of singularities such as the change of the singularity surface itself ({\it fold}) or bifurcations of singular equilibria that change significantly the dynamics near the singularity. 

Quasilinear DAEs (\ref{eq:qdae}) have a strong connection to another important class of dynamical systems -- fast-slow systems. Indeed, a system 
$$
\varepsilon \dot x = f(x,y), \q \dot y = g(x,y)
$$
is a fast-slow system for small $\varepsilon$.
Setting $\varepsilon$ to zero we obtain the so-called \textit{slow system}:
\begin{equation}\label{slowfast}
0 = f(x, y), \q \dot y = g(x, y),
\end{equation}
in which the first equation defines the slow manifold and the second one determines the dynamics, restricted onto it. System (\ref{slowfast}) is a DAE that can be brought to the form of an ODE system via time-differentiation of the algebraic equation and the substitution of $\dot y$ from the second one:
\begin{equation}\label{qldae}
f_x(x,y) \dot x = - f_y(x,y) g(x,y),  \q \dot y = g(x, y).
\end{equation}
That is, the slow system can be reduced to the form
(\ref{eq:qdae}), so our research here contributes also to the studies of bifurcations in slow-fast systems (see \cite{Ashwin_2020}). The correspondence between the terms and notions can be viewed in the following way: 
\begin{itemize}
    \item  A singularity set in DAEs corresponds to a fold set of a slow manifold;
    \item a fold of the singularity surface corresponds to a cusp of a slow manifold.
\end{itemize}

In this paper, we will call bifurcations that are caused by the presence of singularities {\it singular bifurcations}, which is a more general consideration of possible scenarios, which may or may not involve equilibria directly. That is, we consider bifurcations caused by singularities including SIBs but {\it not exclusively} so. 

To make a clear impression of the realm of all possible bifurcations, we focus on low-dimensional quasilinear DAEs of form (\ref{eq:qdae}) for which $x\in\br$ or $x\in \br^2$. Our goal is to provide a list of all possible singular bifurcations of codimension $1$ in such systems.

The paper is organised as follows. In Section~\ref{sec:1D} the basic notions for the study of one-dimensional quasilinear DAEs are given, all possible codimension-one   bifurcations are studied and the behaviour in higher-codimension bifurcations is described. In Section~\ref{sec:2D} the main notions are given for a two-dimensional case, for which the full list of possible codimension-one bifurcations is provided. Section~\ref{sec:loc_norm} contains the rigorous derivation of the dynamical behaviour near some local singular bifurcations from Section~\ref{sec:2D}.

\section{Quasilinear DAEs: One-dimensional case}\label{sec:1D} 

%




%
Consider a quasilinear DAE (\ref{eq:qdae}) for $n = 1$, it is given by 
\begin{equation}\label{eq:dds_1D}
g(x,\a) \dot x =f(x,\a),\q (x,\a)\in \br\times \br^m,
\end{equation}
for smooth functions  $f:\br\times \br^m\to \br$ and $g:\br\times \br^m\to \br$. Note that in this case, the singular set is precisely the set of zeros of $g$ given by
\begin{equation}\label{eq:ss}
\Sigma_\a=\{x\in \br\, :\, g(x,\a)=0\},
\end{equation}
which under a regularity assumption on $g$, is composed of isolated points. We will call every such point 
a {\em singularity}. The set of zeros of $f$ that are not zeros of $g$
\begin{equation}\label{eq:zs}
E_\a=\{x\in \br\, :\, f(x,\a)=0, \, g(x, \a) \neq 0\},
\end{equation}
will be called the {\em equilibrium set}. 
Under a similar regularity assumption, this set is also composed of isolated points.
Each such point is an equilibrium of system (\ref{eq:dds_1D}). 

\begin{Def}\rm  A point $x\in\br$ is called a {\it singular equilibrium} if it lies in the intersection of the singular set $\Sigma_\a$ given by (\ref{eq:ss}) with the zeros set of $f(x, \a)$ for some $\a\in \br^m$.
\end{Def}

\begin{Def}\label{def_simple}\rm
For a fixed parameter value $\a = \a_0$, we will call the point $x_0$ {\em a simple equilibrium} if it is a simple zero of $f$ and not a zero of $g$:
$$
f(x_0, \a_0) = 0, \, f'_x(x_0, \a_0) \neq 0, \, g(x_0, \a_0) \neq 0. 
$$
Analogously, we will call $x_0$ {\em a simple singularity} if it is a simple zero of $g$ and not a zero of $f$:
$$
f(x_0, \a_0) \neq 0, \, g(x_0, \a_0) = 0, \, g'_x(x_0, \a_0) \neq 0. 
$$ 
\end{Def}

\begin{Def}\rm
A simple singularity point $x_0$ is called {\em incoming}  ({\it outgoing}), if there exists a  small neighbourhood $U$ of $x_0$ such that for any initial condition $x \in U$ the solution $x(t)$ reaches
$x_0$ in finite forward (backward) time. 
\end{Def}

\begin{rmk}\rm  The incoming or outgoing simple singularities are known in the DAE literature as the {\it standard singular points} which was introduced in  \cite{RR_1994}. They behave like an {\it impasse point}, where solutions are no longer defined being either attractive  or repelling (cf. \cite{RR_1994,RZ_2001}).
\end{rmk}

For a simple equilibrium point $x_0$ there exists a small neighbourhood $|x - x_0| < \varepsilon$ such that $g(x, \a_0) \neq 0$,
and the system (\ref{eq:dds_1D}) can be rewritten as 
\begin{equation}\label{eq:ftilde}
\dot x=\frac{f(x,\a_0)}{g(x,\a_0)}:=\tilde f(x,\a_0), \,\text{where}\, \tilde f(x_0,\a_0) = 0.
\end{equation}
Thus, the stability type of the equilibrium $(x_0,\a_0)$ is completely determined by the sign of the derivative
\begin{equation}\label{eq:lam_a}
\lambda := \partial_x \tilde f \big|_{(x_0,\a_0)}=\frac{g \partial_x f - f \partial_x g}{g^2}\big|_{(x_0,\a_0)}=\frac{\partial_x f
}{g}\big|_{(x_0,\a_0)}\ne 0,
\end{equation}
which is non-zero by the assumption that $x_0$
is a simple equilibrium for $\a=\a_0$. If $\lambda < 0$, then  the equilibrium $x_0$
is stable; if $\lambda > 0$, then it is unstable.  See Figure~\ref{F:1d_a}.
\begin{figure}[!htb]
	\centerline{
		\includegraphics[width=1\textwidth]{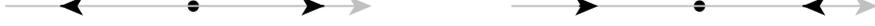}
	} 
	\caption{The local flow around a simple equilibrium 
	for 
	$\lambda > 0$ (unstable, left)
	and for  
	$\lambda < 0$ (stable, right), where $\lam$ is given by (\ref{eq:lam_a}).}
	\label{F:1d_a}
\end{figure}

For a simple singularity $x_0$, there exists a small neighborhood in which $f(x, \a_0) \neq 0$ and the system (\ref{eq:dds_1D}) can be rewritten as
\begin{equation}\label{eq:gtilde}
\tilde g \dot x=1,\,\text{for}\, \tilde g = \frac{g(x,\a)}{f(x,\a)}\, \text{with}\, \tilde g(x_0, \alpha_0) = 0.
\end{equation}
Thus, the following derivative 
\begin{equation}\label{eq:lam_b}
\lambda = \tilde g_x\big|_{(x_0,\a_0)}=\frac{g_xf-gf_x}{f^2}\big|_{(x_0,\a_0)}=\frac{g_x}{f}\big|_{(x_0,\a_0)}\ne 0,
\end{equation}
determines the type of the singularity at $(x_0,\a_0)$. More precisely, if $\lambda > 0$, 
then the simple singularity point $x_0$ is outgoing; if 
$\lambda < 0$, then it is incoming. See Figure~\ref{F:1d_pm}, we put here and further below the double arrow to reflect the fact that the trajectory reaches the singularity in finite time, and the velocity grows to infinity.
 \begin{figure}[!htb]
 	\centerline{
 		\includegraphics[width=1\textwidth]{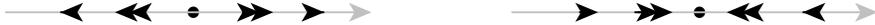}
 	} 
 	\caption{The local flow around a simple singularity point for 	
 	$\lambda > 0$ (left) and for  
 	$\lambda < 0$ (right), where $\lam$ is given by (\ref{eq:lam_b}).}
 	\label{F:1d_pm}
 \end{figure}

It is clear from Definition~\ref{def_simple}, that simple equilibria and simple singularities persist
under generic parametric perturbations. Indeed, the condition $f'_x(x_0, \alpha_0) \neq 0$ implies that by the Implicit Function Theorem, there exists locally a unique function $x^*(\alpha)$ with $x^*(\alpha_0) = x_0$, which fulfills the equation $f(x, \alpha) = 0$. Moreover, this equilibrium maintains
the same stability type, as the exponent $\lambda$ in (\ref{eq:lam_a}) preserves its sign. In a similar way, one can deduct the corresponding property for a simple singularity. 

\begin{thm}\label{thm_1D_struct}
If in an open set $U \subset \mathbb{R}$ system (\ref{eq:dds_1D}) possesses a finite set of  equilibria and a finite set of singularities and all of them are simple, then the system is structurally stable in $U$.
\end{thm}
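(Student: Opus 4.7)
The plan is to show that every sufficiently small $C^1$-perturbation $(\tilde f,\tilde g)$ of $(f,g)$ yields a flow on $U$ topologically equivalent to that of (\ref{eq:dds_1D}). The argument has three ingredients: persistence of all critical points (equilibria and singularities) with their types, constancy of the sign of $f/g$ on the intervals between consecutive critical points, and construction of an orbit-preserving homeomorphism.

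\emph{Persistence.} The paragraph immediately preceding the theorem already invokes the Implicit Function Theorem to argue that a simple equilibrium or simple singularity persists under small parametric perturbations and preserves the sign of the exponent $\lam$ in (\ref{eq:lam_a}), (\ref{eq:lam_b}). The same argument, applied to $C^1$-perturbations of the right-hand side, produces for each critical point $c_i\in U$ a unique nearby $\tilde c_i$ of the same type (stable/unstable equilibrium or incoming/outgoing singularity). No new critical points appear: on any compact subset of $U$ away from the finite set $\{c_i\}$, at least one of $|f|,|g|$ is bounded below, so a small enough perturbation cannot introduce a zero of $f$ or $g$ there.

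\emph{Flow and equivalence.} Order the critical points as $c_1<c_2<\dots<c_N$. On each open interval $(c_i,c_{i+1})$ both $f$ and $g$ are non-vanishing, so $f/g$ has constant sign and by (\ref{eq:ftilde}) the dynamics reduces to the strictly monotone ODE $\dot x=f/g$; by continuity and the persistence step, $\tilde f/\tilde g$ has the same sign on $(\tilde c_i,\tilde c_{i+1})$. Define a homeomorphism $h$ on $U$ by $h(c_i)=\tilde c_i$ together with any orientation-preserving homeomorphism $(c_i,c_{i+1})\to(\tilde c_i,\tilde c_{i+1})$, and matching extensions on the unbounded components. Because the direction of flow agrees on matching intervals and each critical point is mapped to a critical point of the same type, $h$ sends oriented orbits to oriented orbits, giving topological equivalence.

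\emph{Main obstacle.} The subtlety is at the simple singularities: $f/g$ blows up there and trajectories reach them in finite time. However, topological equivalence only asks for preservation of orbit orientation, not of time parametrization. The qualitative picture near a simple singularity is encoded entirely in the sign pattern of $f/g$ on its two adjacent intervals, and this pattern is preserved by the persistence step, so $h$ extends across each singularity without trouble. A minor bookkeeping issue concerning the boundary of $U$ when $U$ is unbounded is handled by restricting the equivalence to a compact subset containing all critical points, or to the maximal invariant subset, using the same monotonicity argument.
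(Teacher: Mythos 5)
Your proof follows essentially the same route as the paper's: persistence of the simple equilibria and singularities together with their order and stability/direction types, followed by an orientation-preserving homeomorphism built interval by interval on the complement; yours is simply more detailed (explicitly excluding new critical points and addressing the finite-time blow-up at singularities). One small slip: away from the critical points \emph{both} $|f|$ and $|g|$ are bounded below on compact subsets (simplicity excludes common zeros), and you need both bounds --- not ``at least one of $|f|,|g|$'' --- to rule out new zeros of each function.
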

\begin{proof} Under sufficiently small perturbations, every simple equilibrium and every simple singularity stays in a small neighbourhood of its initial position, remain simple, are distributed in the same order on the line and keep their stability types. Also, neither of these points reaches the boundary of $U$. The intervals bounded by these equilibria and singularities can be homeomorphically conjugated, with the direction of motion preserved.
\end{proof}

However, when the conditions of Theorem~\ref{thm_1D_struct} are violated, one  may encounter {\em bifurcations}. There are three such possibilities {of singular bifurcations}\footnote{We do not consider cases of infinite sets of equilibria or singularities, as it violates the assumption that functions $f$ and $g$ are  smooth.}:
\begin{itemize}
    \item[A1.] A non-simple equilibrium: $f(x,\alpha) = 0$, $f'_x(x,\alpha) = 0$;
    \item[A2.] A non-simple singularity: $g(x,\alpha) = 0$, $g'_x(x,\alpha) = 0$;
    \item[A3.] A singular equilibrium: $f(x,\alpha) = 0$, $g(x,\alpha) = 0$.
\end{itemize}

These cases are not exclusive to each other. They can happen simultaneously, either at the same or different points, which increases the codimension of the problem. In the following, we  assume that the bifurcation conditions occur at $a_0 = 0$ and $x_0 = 0$, which can be achieved by appropriate translation of coordinates and parameters.

We start with formulating the simplest possible cases, i.e. the cases of {\it codimension-$1$}. 
\begin{itemize}
    \item[A1.1.] A codimension one non-simple equilibrium: $f(0,0) = 0$, $f'_x(0,0) = 0$, $f''_x(0, 0) \neq 0$, $g(0,0) \neq 0$;
    \item[A2.1.] A codimension one non-simple singularity: $g(0,0) = 0$, $g'_x(0,0) = 0$, $g''_x(0, 0) \neq 0$, $f(0,0) \neq 0$; 
    \item[A3.0,0.] A transcritical singularity (codimension-$1$ singular equilibrium\footnote{Two zeros represent that the left- and right-hand sides of the equation are not degenerate. One codimension is added, because both functions vanish at the same point. The generalization of this case, case A3.$m,n$ is given at Page~\pageref{HO_A3}}): $f(0,0) = 0$, $g(0,0) = 0$, $f'_x(0,0) \neq 0$, $g'_x(0,0) \neq 0$. 
\end{itemize}


The case A1.1 is completely analogous to the usual fold bifurcation in dynamical systems without {singularities}. Indeed,
 since $g(x,\alpha) \neq 0$ in a small neighborhood of $(x_0,\a_0)=(0,0)$, we can rewrite the system  (\ref{eq:dds_1D})  using the function $\tilde f$ as defined in (\ref{eq:ftilde}), where 
$$
\tilde f(0, 0) = \tilde f'_x(0, 0) = 0, \ \tilde f''_{xx}(0, 0) = \frac{f''_{xx}}{g}\big|_{(0,0)} \neq 0.
$$

\begin{proposition}\label{prop:nf_equi_fold}
Assume that for system (\ref{eq:dds_1D}) the 
conditions of case A1.1 are fulfilled for $\alpha= 0$ at $x = 0$, and $f'_\alpha(0, 0) \neq 0$. Then for all small $\alpha$ by an invertible change of coordinate and parameter, the system can be brought near the origin to the following normal form:
\begin{equation}\label{nf_equi_fold}
  \dot \eta = \beta +{s}\eta^2 + O(\eta^3),
  \end{equation}
  {where $s=\sign\frac{f_{xx}''}{g}\big|_{(0,0)}$}
  (cf. Figure ~\ref{F:simp_deg}(left)).
\end{proposition}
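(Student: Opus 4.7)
The plan is to reduce the quasilinear DAE to an explicit scalar ODE and then apply the classical fold (saddle--node) normal form reduction. Since $g(0,0) \neq 0$ by hypothesis A1.1, $g(x,\alpha)$ does not vanish in some neighbourhood $U$ of the origin, so dividing (\ref{eq:dds_1D}) by $g$ gives the smooth scalar ODE
\[
\dot x = \tilde f(x,\alpha), \qquad \tilde f(x,\alpha) := \frac{f(x,\alpha)}{g(x,\alpha)},
\]
to which the singular character of the DAE contributes nothing locally. I would first verify that $\tilde f$ satisfies the standard fold conditions at $(0,0)$: direct differentiation using $f(0,0) = f'_x(0,0) = 0$ yields $\tilde f(0,0)=0$, $\partial_x \tilde f(0,0)=0$, and
\[
\partial_{xx}^2 \tilde f(0,0) = \frac{f''_{xx}(0,0)}{g(0,0)} \neq 0,
\]
while the hypothesis $f'_\alpha(0,0) \neq 0$ combined with $f(0,0)=0$ gives the transversality condition $\partial_\alpha \tilde f(0,0) = f'_\alpha(0,0)/g(0,0) \neq 0$.

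Next I would straighten the critical branch. Since $\partial_{xx}^2 \tilde f(0,0) \neq 0$, the implicit function theorem applied to $\partial_x \tilde f(x,\alpha) = 0$ produces a smooth curve $x = h(\alpha)$ with $h(0)=0$ along which the derivative of $\tilde f$ in $x$ vanishes. Setting $\eta := x - h(\alpha)$ and Taylor--expanding in $\eta$ one obtains
\[
\dot\eta = a(\alpha) + c(\alpha)\,\eta^2 + O(\eta^3),
\]
with $a(\alpha) = \tilde f(h(\alpha),\alpha)$, $a(0)=0$, and $c(0) = \tfrac{1}{2}\partial_{xx}^2 \tilde f(0,0) \neq 0$; the linear term in $\eta$ is absent by construction of $h$. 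Differentiating $a$ and using $\partial_x \tilde f(h(\alpha),\alpha) \equiv 0$ on the branch gives $a'(0) = \partial_\alpha \tilde f(0,0) \neq 0$.

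Finally I would introduce the new parameter $\beta := a(\alpha)$, which is a local diffeomorphism near $\alpha=0$ by the inverse function theorem, and then rescale $\eta \mapsto \eta/\sqrt{|c(0)|}$ with a compatible rescaling of $\beta$ and time, obtaining the claimed normal form (\ref{nf_equi_fold}) with $s = \sign(c(0)) = \sign(f''_{xx}/g)|_{(0,0)}$. There is no real obstacle here: the only delicate point is making sure that each change of coordinates is smoothly invertible, which is guaranteed by the non-degeneracy conditions of A1.1 together with $f'_\alpha(0,0)\neq 0$, and that the higher-order terms in $\eta$ remain in the $O(\eta^3)$ remainder after the rescaling, which is automatic.
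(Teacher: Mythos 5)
Your proposal is correct and follows essentially the same route as the paper: divide by the nonvanishing $g$ to get the explicit ODE $\dot x=\tilde f(x,\alpha)$, check the fold conditions $\tilde f=\tilde f'_x=0$, $\tilde f''_{xx}=f''_{xx}/g\neq 0$, $\tilde f'_\alpha=f'_\alpha/g\neq 0$, and invoke the standard saddle--node normal form reduction --- the paper simply cites Kuznetsov, Theorem~3.2 at this point, whereas you unpack that theorem's proof (shift to the critical branch, reparametrize by $\beta=a(\alpha)$, rescale). The only cosmetic caveat is that the final rescaling should use the $\alpha$-dependent coefficient $c(\alpha)$ (as the paper does in the analogous Proposition for case A2.1) so that the $\eta^2$ coefficient is exactly $s$ rather than $s+O(\alpha)$, which your ``compatible rescaling'' remark accommodates.
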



This result immediately follows from \cite{Kuz}, Theorem~3.2.
Similarly, one can derive the normal form for the non-simple singularity in case~A2.1. 

\begin{proposition}\label{prop:nf_deg_fold}
Assume that for the system (\ref{eq:dds_1D}) the 
conditions of case A2.1 are fulfilled for $\alpha = 0$ at $x = 0$, and $g'_\alpha(0, 0) \neq 0$. Then, for all small $\alpha$ by an invertible change of coordinate and parameter, the system  can be brought near the origin to the following normal form:
\begin{equation}\label{nf_deg_fold}
    (\beta+{s} \eta^2 + O(\eta^3)) \dot \eta = 1,
\end{equation}
  {where $s=\sign\frac{g_{xx}''}{f}\big|_{(0,0)}$} (cf. Figure ~\ref{F:simp_deg}(right)).
\end{proposition}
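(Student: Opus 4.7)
The plan is to parallel the proof of Proposition~\ref{prop:nf_equi_fold} but operate on the reciprocal quantity $\tilde g = g/f$ rather than $\tilde f = f/g$. Since $f(0,0)\ne 0$, in a small neighbourhood of the origin the equation (\ref{eq:dds_1D}) is equivalent to $\tilde g(x,\alpha)\dot x = 1$ as in (\ref{eq:gtilde}). The hypotheses A2.1 together with $g'_\alpha(0,0)\ne 0$ translate into the \emph{fold-with-parameter} conditions on $\tilde g$, namely $\tilde g(0,0)=0$, $\tilde g'_x(0,0)=0$, $\tilde g''_{xx}(0,0) = g''_{xx}/f|_{(0,0)}\ne 0$ (whose sign is precisely $s$), and $\tilde g'_\alpha(0,0) = g'_\alpha/f|_{(0,0)}\ne 0$.

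The next step is to normalize $\tilde g$ via the parametric Morse lemma. Applying the implicit function theorem to $\tilde g'_x(x,\alpha)=0$ produces a smooth curve of critical points $x = x_*(\alpha)$ with $x_*(0)=0$. Set $\beta_0(\alpha) := \tilde g(x_*(\alpha),\alpha)$; an envelope computation gives $\beta_0'(0) = \tilde g'_\alpha(0,0)\ne 0$, so $\beta_0$ is a local diffeomorphism of the parameter line. The parametric Morse lemma then provides a smooth coordinate change $\eta = \eta(x,\alpha)$, satisfying $\eta(x_*(\alpha),\alpha)=0$ and $\eta'_x(x_*(\alpha),\alpha)>0$, such that
\[ \tilde g(x,\alpha) = \beta_0(\alpha) + s\,\eta(x,\alpha)^2 \]
identically.

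With $x = X(\eta,\alpha)$ the inverse, the chain rule gives $\dot x = X'_\eta\,\dot\eta$, so the equation becomes $(\beta_0(\alpha)+s\eta^2)\,X'_\eta(\eta,\alpha)\,\dot\eta = 1$. Expanding $X'_\eta = h_0(\alpha)+h_1(\alpha)\eta+O(\eta^2)$ with $h_0(0)>0$, the product contains the unwanted cross-term $h_1(\alpha)\beta_0(\alpha)\eta$ at first order in $\eta$. I would kill it by a translation $\eta\mapsto\eta+c(\alpha)$ with $c(0)=0$, chosen so that the resulting linear-in-$\eta$ coefficient vanishes; this is solvable by the implicit function theorem because the coefficient of $c$ equals $2sh_0(0)\ne 0$. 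A subsequent rescaling $\eta\mapsto\mu(\alpha)\eta$ with $\mu(\alpha) = (s/[s h_0(\alpha)+O(\beta_0)])^{1/3}>0$ (well-defined and smooth near $\alpha=0$ since $h_0(0)>0$) normalizes the $\eta^2$ coefficient to $s$, and setting $\beta(\alpha)$ to be the resulting constant-in-$\eta$ term produces $\beta(0)=0$ and $\beta'(0)\ne 0$ (inherited from $\beta_0'(0)\ne 0$ and $\mu(0)>0$), so $\beta$ is an admissible parameter change.

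The main bookkeeping obstacle is the third step: verifying that every cross-term of mixed order in $\beta_0$ and $\eta$ that appears after multiplying out can be absorbed either into the shifted $\beta(\alpha)$, into the normalized $s\eta^2$ term, or into the $O(\eta^3)$ remainder. This works because $\beta_0(\alpha)\to 0$ as $\alpha\to 0$, so contributions like $O(\beta_0)\,\eta^2$ and $O(\beta_0^2)$ are automatically of sub-leading order near the bifurcation point, and the sign $s$ is preserved throughout since $\mu$ and $h_0$ are positive. The resulting equation has the claimed form (\ref{nf_deg_fold}).
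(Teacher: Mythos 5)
Your proof is correct, and it reaches the normal form by a genuinely different (though related) route from the paper's. The paper stays entirely at the level of the Taylor expansion of $\tilde g = g/f$ in $x$ after passing to (\ref{eq:gtilde}): it performs a single parameter-dependent \emph{affine} shift $x = y + \delta(\alpha)$, chosen by the implicit function theorem so as to annihilate the linear coefficient $g_1(\alpha) + 2g_2(\alpha)\delta(\alpha) + O(\alpha^2)$, and then a single cube-root rescaling $y = |a(\alpha)|^{-1/3}\eta$ to normalize the quadratic coefficient, simply carrying the $O(y^3)$ remainder along. You instead invoke the parametric Morse lemma to flatten $\tilde g$ \emph{exactly} to $\beta_0(\alpha) + s\,\eta^2$ via a nonlinear change of variable. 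The price of that exactness is the Jacobian factor $X'_\eta$, which reintroduces precisely the kind of linear-in-$\eta$ cross term ($h_1\beta_0\eta$) that the Morse normalization was meant to remove, so you must then repeat the paper's two moves (a translation to kill the linear term, a cube-root rescaling to normalize the quadratic one) on the product. Your bookkeeping of these steps is sound: the translation is solvable because the linear coefficient is $O(\beta_0)$ while the quadratic coefficient is bounded away from zero; the identity $\beta_0'(0) = \tilde g'_\alpha(0,0) = g'_\alpha/f\big|_{(0,0)} \neq 0$ correctly transfers the transversality hypothesis to the new parameter, matching the paper's observation that $g_{0,\a}'(0)\neq 0$ makes all transformations invertible; and the sign $s$ survives because $\mu$ and $h_0$ are positive. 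The net effect is that your argument is somewhat longer, but it makes the geometric content explicit (the critical value of $\tilde g$ crossing zero with nonzero speed in $\alpha$), whereas the paper's version is the more economical direct computation.
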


\begin{proof}
In some small neighbourhood of the origin, we have $f(x, \alpha) \neq 0$ for all small $\alpha$. Then, the system can be rewritten in the form (\ref{eq:gtilde}) with
$$
\tilde g(0, 0) = \tilde g'_x(0, 0) = 0, \ \tilde g''_{xx}(0, 0) = \frac{g''_{xx}}{f}\big|_{(0,0)} \neq 0.
$$
We expand $\tilde g$ in Taylor series in $x$ as
$\tilde g = g_0(\alpha) + g_1(\alpha) x + g_2(\alpha) x^2 + O(x^3)$ with $g_0(0) = g_1(0) = 0$ and $g_2(0) = a \neq 0$. Using  a parameter-dependent coordinate shift of the form $x = y + \delta(\alpha)$ with $\delta(0) = 0$, we can rewrite $\tilde g$ as:
\begin{equation*}
\begin{split}
\tilde g(y + \delta(\alpha), \alpha) = & (g_0(\alpha) + g_1(\alpha) \delta(\alpha) + O(\alpha^2)) + (g_1(\alpha) + 2 g_2(\alpha)\delta(\alpha) + O(\alpha^2)) y \\
& + (g_2(\alpha) + O(\alpha))y^2 + O(y^3),
\end{split}
\end{equation*}
where the linear term can be neglected by an appropriate choice of $\delta$. Indeed, the coefficient of the linear term vanishes at $\alpha = \delta = 0$, and its derivative with respect to $\delta$ at zero
is given by $2a \neq 0$. By the Implicit Function Theorem, there exists a function $\delta(\alpha) = \delta_1 \alpha + O(\alpha^2)$ with $\delta_1 = -\frac{g_{1,\a}'(0)}{2 a}$. Thus, we have
$$
\left[g_{0,\a}'(0) \alpha + O(\alpha^2) + a(\alpha)y^2 +
O(y^3)\right] \dot y = 1
$$
with $a(0) = a$, which becomes (\ref{nf_deg_fold}) using the scaling 
$$
y = \frac{1}{|a(\alpha)|^{1/3}} \eta, \ 
\beta  = \frac{1}{|a(\alpha)|^{1/3}}(g_{0,\a}'(0) \alpha + O(\alpha^2)).
$$
Moreover,
as $g_{0,\a}'(0) = \frac{g'_\alpha}{f}(0, 0) \neq 0$, all the
transformations are invertible.
\end{proof}

The  bifurcation occurs
in the following way, if $s = -1$: for  $\beta = 0$ there exists
a locally unique non-simple singularity point, which
disappears when $\beta < 0$ and is replaced by a pair
of incoming and outgoing simple singularity points when $\beta > 0$. See
Figure~\ref{F:simp_deg}~(right).

 \begin{figure}[!htb]
 	\centerline{
 	\includegraphics[width=1\textwidth]{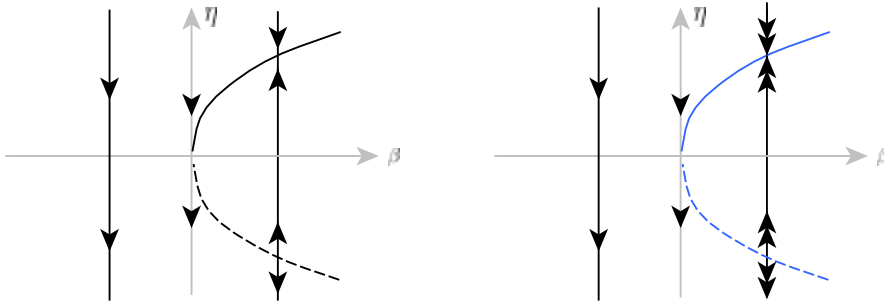}
 	} 
 	\caption{The equilibrium fold bifurcation (left) and the singularity fold bifurcation (right) in normal form (\ref{nf_equi_fold}) and (\ref{nf_deg_fold}), respectively, where we have taken $s = -1$ in both cases.}
 	\label{F:simp_deg}
 \end{figure}

The following Proposition states the normal form of the transcritical singularity bifurcation in case A3.0,0.

\begin{proposition}\label{prop:1d_deg_bif_simple}
If the system (\ref{eq:dds_1D})  satisfies the conditions
of transcritical singularity, case A3.0.0, at $(x, \alpha) = (0, 0)$, and
$$A:=(g_x' f_\a' - f_x' g_\a')\big|_{(0,0)} \neq 0,$$
then there exists an
invertible change of coordinate and parameter,
which brings system (\ref{eq:dds_1D}) near the origin to the normal form
\begin{equation}\label{eq:1d_nf_a}
\eta\dot \eta = \beta + s \eta + O(\eta^2),
\end{equation}
where
$$
s = \sign \frac{f_x'}{g_x'}\big|_{(0,0)}.
$$
(cf. Figure \ref{F:1d_nf_a}).
\end{proposition}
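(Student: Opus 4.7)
The plan is to reduce the equation to normal form in four steps: (i) straighten the singular curve to $y = 0$, (ii) factor out the leading coefficient of $g$, (iii) introduce a new parameter that absorbs the constant term on the right, and (iv) rescale $y$ to normalize the coefficient of the linear term. The required sign $s$ and the invertibility of the parameter change will come out of the hypothesis $A \neq 0$.

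First, since $g_x'(0,0) \neq 0$, the Implicit Function Theorem yields a smooth branch $x = x_s(\alpha)$ with $x_s(0) = 0$ and $g(x_s(\alpha), \alpha) \equiv 0$, and differentiating this identity gives $x_s'(0) = -g_\alpha'(0,0)/g_x'(0,0)$. I would perform the time-invariant shift $y = x - x_s(\alpha)$, so that $\dot x = \dot y$ and the system takes the form
\begin{equation*}
\bigl(g_1(\alpha)\, y + O(y^2)\bigr)\,\dot y \;=\; f_0(\alpha) + f_1(\alpha)\, y + O(y^2),
\end{equation*}
with $g_1(\alpha) = \partial_x g(x_s(\alpha),\alpha)$, $f_0(\alpha) = f(x_s(\alpha),\alpha)$, $f_1(\alpha) = \partial_x f(x_s(\alpha),\alpha)$. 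By construction $g_1(0) = g_x'(0,0) \neq 0$, $f_0(0) = 0$, and $f_1(0) = f_x'(0,0) \neq 0$.

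Next, I would divide both sides by $g_1(\alpha) \neq 0$ and set $\tilde \beta(\alpha) := f_0(\alpha)/g_1(\alpha)$ together with $c_1(\alpha) := f_1(\alpha)/g_1(\alpha)$. The crucial computation is
\begin{equation*}
f_0'(0) \;=\; f_x'(0,0)\, x_s'(0) + f_\alpha'(0,0) \;=\; \frac{f_\alpha' g_x' - f_x' g_\alpha'}{g_x'}\bigg|_{(0,0)} \;=\; \frac{A}{g_x'(0,0)},
\end{equation*}
so $\tilde\beta'(0) = A/(g_x'(0,0))^2 \neq 0$. By the Implicit Function Theorem, $\tilde\beta$ is a local diffeomorphism of the parameter at $\alpha = 0$, and so it may be used as the new parameter. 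The coefficient $c_1$ satisfies $c_1(0) = f_x'(0,0)/g_x'(0,0)$, whose sign is precisely the claimed $s$.

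Finally, I would rescale $y = |c_1(\alpha)|\,\eta$ and set $\beta = \tilde\beta(\alpha)/|c_1(\alpha)|^2$. The left-hand side becomes $|c_1|^2\,\eta\,\dot\eta\,(1 + O(\eta))$, so after dividing by $|c_1|^2(1 + O(\eta))$ (which is invertible near $\eta = 0$) and using $(a + b\eta + O(\eta^2))/(1 + O(\eta)) = a + b\eta + O(\eta^2)$ to preserve the leading terms, the equation becomes $\eta\dot\eta = \beta + s\eta + O(\eta^2)$, as required. The main obstacle is really just the bookkeeping in the third step: one must verify that the map $\alpha \mapsto \tilde\beta(\alpha)$ is actually invertible, and this is exactly where the nondegeneracy condition $A \neq 0$ enters in an essential way; everything else is algebraic manipulation combined with smooth rescaling.
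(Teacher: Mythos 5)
Your argument is correct and follows essentially the same route as the paper's proof: an Implicit Function Theorem shift onto the singular branch $x_s(\alpha)$, division by the nonvanishing leading coefficient of $g$, the computation $f_0'(0)=A/g_x'(0,0)$ showing that $\tilde\beta$ is an invertible new parameter, and a final rescaling of $y$. The only (harmless) discrepancy is your identity $(a+b\eta+O(\eta^2))/(1+O(\eta))=a+b\eta+O(\eta^2)$, which in fact perturbs the linear coefficient by $O(a)=O(\beta)$ and so requires one more $\alpha$-dependent rescaling to make the coefficient exactly $s$; the paper sidesteps this by reparametrizing time via $d\tau=dt/(1+O(y))$, which absorbs the factor $(1+O(y))$ exactly without touching the right-hand side.
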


\begin{proof}
As $g_x'(0, 0) \neq 0$, the implicit equation $g(x,\a)=0$ can be locally uniquely resolved with respect to $x$ for small $x$ and $\alpha$. That is, there exists a smooth function $x^*(\alpha)$ such that 
$g(x^*(\alpha), \alpha) \equiv 0$ with $x^*(0) = 0$ 
and $x^*(\alpha) = -\frac{g_\a'}{g_x'}\big|_{(0, 0)} \alpha + O(\alpha^2)$.
Consider a parameter-dependent shift of coordinate 
$x = x^*(\alpha) + y$. Then, the left-hand side of (\ref{eq:dds_1D}) is transformed as
\begin{align}
g(x^*(\alpha) + y, \alpha) &= g_x'(x^*(\alpha),\alpha)y + O(y^2) = 
(g_1 + O(\alpha))y + O(y^2) \notag\\
&=(g_1 + O(\alpha))y(1 + O(y)),\label{eq:transcrit_nf_pf}
\end{align} 
where $g_1 = g_x'(0, 0)\ne 0$. The right-hand side function $f$  becomes
\begin{align}
f(x^*(\alpha) + y, \alpha) &= f(x^*(\alpha), \alpha) +
f_x'(x^*(\alpha), \alpha)y + O(y^2) \notag \\
&=\frac{A} {g_1}\alpha + O(\alpha^2) + (f_1 + O(\alpha)) y + O(y^2),\label{eq:transcrit_f} 
\end{align}
where $f_1 = f_x'(0, 0)$.

We choose the neighbourhood small enough for term 
$(1 + O(y))$ in formula (\ref{eq:transcrit_nf_pf}) to stay always positive. Then, we reparametrize time by formula $dt/(1 + O(y)) = d \tau$, and also 
divide by a non-zero coefficient $(g_1+O(\a))$, system (\ref{eq:dds_1D}) is transformed as:
%
\begin{equation}\label{eq:trans_1}
    y \dot y = \frac{A} {g_1^2}\alpha + O(\alpha^2) + \left(\frac{f_1}{g_1} + O(\alpha)\right) y + O(y^2).
\end{equation}
It leads to (\ref{eq:1d_nf_a}) by scaling $y \to \left|\frac{f_1}{g_1} + O(\alpha)\right| \eta$ and setting 
\begin{equation}\label{1d_A3_1_beta}
\beta = \frac{A} {f_1^2}\alpha + O(\alpha^2).
\end{equation}
Notice that  as $A \neq 0$, the new small parameter $\beta$ diffeomorphically depends on $\alpha$.
\end{proof}

 \begin{figure}[!htb]
 	\centerline{
 		\includegraphics[width=1\textwidth]{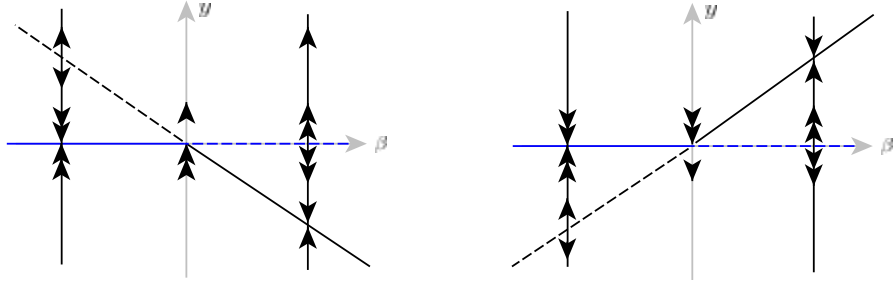}
 	} 
 	\caption{Transcritical singularity bifurcation of (\ref{eq:1d_nf_a}): $s > 0$ (left); $s < 0$ (right). Both feature a transition from an incoming to an outgoing singularity as $\beta$ changes from the negative to the positive, where $\beta$ changes according to (\ref{1d_A3_1_beta}). The dashed (solid) lines indicate unstable (stable) equilibrium and singularity points.}
 	\label{F:1d_nf_a}
 \end{figure}

\begin{ex}\label{ex:example_A1_2_3}\rm The following systems demonstrate examples of cases A1.1, A2.1 and A3.0,0, respectively.
\begin{align}
   & (x+1)\dot x=x^2+\a,\q x,\a\in \br\label{eq:ex_equi_fold}\\
&    (x^2+\a)\dot x=x+1,\q x,\a\in \br\label{eq:ex_dege_fold}\\
 &   (x+x^2+\a)\dot x=x-x^2+2\a,\q x,\a\in \br. \label{eq:ex_transcrit}
\end{align}
\end{ex}

More precisely, consider the system (\ref{eq:ex_equi_fold}) with $f(x, \a) = x^2+\a$ and $g(x, \a) = x + 1$ at $(x,\a)=(0,0)$. It satisfies the non-simple equilibrium conditions, case  A1.1:
\[f(0,0)=f_x'(0,0) =0, \q f''_{xx}(0, 0) = 2 \neq 0,\q g(0,0) = 1 \neq 0, \q f_\a'(0,0) = 1 \neq 0.\]
By Proposition \ref{prop:nf_equi_fold}, the normal form of this bifurcation is given by~(\ref{nf_equi_fold}) with $s=\sign\frac{f_{xx}''}{g}\big|_{(0,0)} = 1$.


 For the system (\ref{eq:ex_dege_fold}) at the non-simple singularity point $(x,\a)=(0,0)$, one has 
\[g(0,0)=g_x'(0,0) = 0, \q g''_{xx}(0, 0) = 2 \neq 0, \q f(0,0) = 1 \neq 0,\q g_\a'(0,0)=1 \neq 0.\]
By Proposition~\ref{prop:nf_deg_fold}, the normal form here is~(\ref{nf_deg_fold}) with $s=\sign\frac{g_{xx}''}{f}\big|_{(0,0)} = 1$.

The system (\ref{eq:ex_transcrit}) at point $(x,\a)=(0,0)$ satisfies the conditions of the transcritical singularity:
\[f(0,0)=g(0,0)=0,\q f_x'(0,0) = 1\neq 0,\q g_x'(0,0)=1 \neq 0, \q \left|\begin{array}{cc}g_x'& g_\a'\\f_x'& f_a'\end{array}\right|_{(0,0)} = 1 \neq 0.\]
By Proposition \ref{prop:1d_deg_bif_simple}, the normal form of this bifurcation is (\ref{eq:1d_nf_a}) with
$s = \sign \frac{f_x'}{g_x'}\big|_{(0,0)} = 1$.

\END

Besides the codimension-$1$  bifurcations listed by A1.1--A3.0,0, one can describe generic unfoldings of bifurcations of higher codimension  in a similar way.
These bifurcations admit invertible changes of coordinates (shifts), that bring them to the corresponding normal forms. We do not give a proof here, because it is straightforward: similar to cases A1.1 and A2.1 there always exists the parameter-dependent shift of coordinates $x \to y + \delta(\a)$ such that the term that vanishes at $\a=0$ and has the highest power in the Taylor expansion ($y^m$ or $y^n$ below), is eliminated for all small $\a$.
We distinguish the following bifurcations:
\begin{itemize}
    \item[A1.$m$.] A codimension-$m$ equilibrium: $f(0,0) = 0$, $f^{(i)}_x(0,0) = 0$ for $1 \le i \le m$ and $f^{(m + 1)}_x(0, 0) \neq 0$, $g(0,0) \neq 0$.
    The normal form is:
    \begin{equation}\label{HO_A1}
        \dot y = \beta_0 + \beta_1 y + \beta_2 y^2 + \ldots + \beta_{m - 1} y^{m - 1} + s y^{m + 1} + O(y^{m + 2})
    \end{equation}

    \item[A2.$n$.] A codimension-$n$ singularity: $g(0,0) = 0$, $g^{(i)}_x(0,0) = 0$ for $1 \le i \le n$ and $g^{(n + 1)}_x(0, 0) \neq 0$, $f(0,0) \neq 0$. The normal form of this bifurcation is:
     \begin{equation}\label{HO_A2}
        ( \alpha_0 + \alpha_1 y + \alpha_2 y^2 + \ldots + \alpha_{n - 1} y^{n - 1} + s y^{n + 1} + O(y^{n + 2}))\dot y = 1
    \end{equation}
    \item[A3.$m,n$.] A codimension-($1 + m + n$) singular equilibrium: $f(0,0) = 0$, $f^{(i)}_x(0,0) = 0$ for $1 \le i \le m$, $g(0,0) = 0$, $g^{(j)}_x(0,0) = 0$ for $1 \le j \le n$
    and $f^{(m + 1)}_x(0, 0) \neq 0$, $g^{(n + 1)}_x(0, 0) \neq 0$. Its normal form is:
    \begin{equation}\label{HO_A3}
    (\a_0+\a_1y+\cdots + \a_{n-1} y^{n-1} + s y^{n+1})\dot y= \beta_0+\beta_1y+\cdots+ \beta_{m}y^{m}+y^{m+1}.
    \end{equation}
\end{itemize}
In the formulas above coefficient $s$ is equal to either $+1$ or $-1$, and all $\alpha_i$ and $\beta_i$ are small unfolding parameters.

\begin{lem}
In one-dimensional system (\ref{eq:dds_1D}) the higher-order bifurcations occur in the  way that under small perturbations the following dynamics is observed, depending on the case A1--A3.

\begin{itemize}
    \item[Case A1.$m$:] 
         For any combination of integers $\{ a_1, a_2, \ldots, a_k\}$, such that $A = \sum\limits_{i = 1}^k a_i \le m + 1$ and $A$ has the same parity with $m + 1$, there exists a small perturbation of normal form (\ref{HO_A1}), such that it has locally $k$ equilibria with coordinates $x_1 < x_2 < \ldots < x_k$, and every $x_i$ is simple if $a_i = 1$ or non-simple of codimension $a_i - 1$, if $a_i > 1$.
    
    \item[Case A2.$n$:] 
    For any combination of integers $\{ b_1, b_2, \ldots, b_l\}$, such that $B = \sum\limits_{j = 1}^l b_i \le  n + 1$ and $A$ has the same parity with $n$, there exists a small perturbation of normal form (\ref{HO_A2}), such that it has locally $l$ singularities with coordinates $y_1 < y_2 < \ldots < y_l$, and every $y_j$ is simple if $b_j = 1$ or non-simple of codimension $b_j - 1$, if $b_j > 1$.
    
    \item[Case A3.$m,n$:] 
    For any two combinations of integers $\{ a_1, a_2, \ldots, a_k\}$ and $\{ b_1, b_2, \ldots, b_l\}$ described above,
    and any two sets of local coordinates: $x_1 < x_2 < \ldots < x_k$ and $y_1 < y_2 < \ldots < y_l$,
    there exists a small perturbation of normal form (\ref{HO_A3}), such that it has:
      \begin{itemize}
       \item[1)] an equilibrium at the point $x_i$, if $x_i$ does not coincide with any of $y_j$; the equilibrium is simple if $a_i = 1$ or non-simple of codimension $a_i - 1$, if $a_i > 1$.
       \item[2)] a singularity at the point $y_j$, if $y_j$ does not coincide with any of $x_i$; the singularity is simple if $b_j = 1$ or non-simple of codimension $b_j - 1$, if $b_j > 1$.
       \item[3)] a singular equilibrium at point $x_i$ if $x_i = y_j$ for some $j$; this point is degenerate of codimension $a_i + b_j + 1$.
      \end{itemize}
\end{itemize}
\end{lem}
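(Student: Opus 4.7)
The plan is to handle each case by an explicit polynomial construction: given the prescribed multiplicity pattern, I write down a real polynomial whose real roots, counted with multiplicity, realize that pattern, and then I read off the unfolding parameters as its coefficients. The only nonelementary ingredient is the parity observation that a real polynomial of degree $N$ with fixed nonzero leading coefficient has real zeros whose total multiplicity is congruent to $N\pmod 2$, since non-real roots come in complex conjugate pairs. This is exactly the content of the stated parity hypotheses (the relevant degrees being $m+1$ and $n+1$).

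For case A1.$m$, given $\{a_1,\dots,a_k\}$ with $A:=\sum_i a_i\le m+1$ and $A\equiv m+1\pmod 2$, I would form
\[
P(y)=s\prod_{i=1}^k(y-x_i)^{a_i}\cdot R(y),
\]
where $R$ is a real monic polynomial of the even degree $m+1-A$ without real zeros, for instance a product of factors $(y-z_j)^2+\ve_j^2$ with small $z_j,\ve_j$. The coefficient of $y^m$ in $P$ equals $s\bigl(-\sum_i a_ix_i-2\sum_j z_j\bigr)$ and must vanish because (\ref{HO_A1}) has no $y^m$ term; this is arranged by choosing $\sum_j z_j=-\tfrac12\sum_i a_ix_i$ (in the limiting case $A=m+1$ one instead places the $x_i$'s so that $\sum_i a_ix_i=0$, which is always possible since the coordinates are free to move in a small neighborhood of $0$). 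The remaining coefficients, all small because every root is small, define $\beta_0,\dots,\beta_{m-1}$. Case A2.$n$ is handled by the same construction applied to the left-hand polynomial of (\ref{HO_A2}); since the right-hand side is identically $1$, each zero of the constructed polynomial becomes a singularity of the prescribed type, in accordance with Proposition~\ref{prop:nf_deg_fold}.

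For case A3.$m,n$ the two sides of (\ref{HO_A3}) decouple and can be prescribed \emph{independently}. The right-hand polynomial $\beta_0+\cdots+\beta_my^m+y^{m+1}$ carries no vanishing-coefficient constraint, so I build it as a monic real polynomial of degree $m+1$ with zeros $x_1,\dots,x_k$ of multiplicities $a_1,\dots,a_k$, padded with small complex pairs. The left-hand polynomial is built exactly as in case A2.$n$, producing zeros at $y_1,\dots,y_l$. At any $x_i$ that is not one of the $y_j$ the left-hand polynomial is locally nonzero, so dividing through reduces the system to the ODE of Proposition~\ref{prop:nf_equi_fold} (or its higher-codimension analogue), yielding an equilibrium of codimension $a_i-1$; symmetrically at a $y_j$ distinct from every $x_i$ one obtains a singularity of codimension $b_j-1$. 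At a coincidence $x_i=y_j$ both sides vanish to the prescribed orders $a_i$ and $b_j$, and the codimension is then $a_i+b_j+1$ by direct bookkeeping of the defining conditions.

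The main technical point I expect to need care with is that the constructed polynomials sit inside the normal forms (\ref{HO_A1})--(\ref{HO_A3}), which also carry smooth higher-order tails $O(y^{m+2})$ or $O(y^{n+2})$. These tails must not spoil the prescribed zero pattern; this follows by a standard continuity (or Rouch\'e-type) argument on small disks around each prescribed root, since the constructed polynomials have isolated real zeros of the correct local multiplicity and positive separation, so any sufficiently small smooth perturbation preserves the real-root count in each disk. The only other delicate piece is the boundary bookkeeping when $A=m+1$ or $B=n+1$, where the vanishing-coefficient constraint must be absorbed either into a slight adjustment of the prescribed coordinates or into the $O(\cdot)$ tail, which provides exactly the additional degree of freedom needed.
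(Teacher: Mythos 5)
Your construction is essentially the paper's: the published proof also realizes the prescribed multiplicity pattern by expanding the product $\prod_i(y-x_i)^{a_i}$ (and, in case A3.$m,n$, a second product $\prod_j(y-y_j)^{b_j}$ for the left-hand side) and reading the resulting coefficients off as the small unfolding parameters. You are in fact more careful on three points the paper passes over silently: padding with non-real quadratic factors $(y-z_j)^2+\ve_j^2$ so that the degree and leading term match $sy^{m+1}$ when $A<m+1$; absorbing the constraint that the $y^m$ (resp.\ $y^n$) coefficient is absent from normal forms (\ref{HO_A1})--(\ref{HO_A3}); and the influence of the $O(y^{m+2})$ tails. Only the last point remains slightly short in your version: a Rouch\'e-type count controls the number of zeros in each small disk but not their multiplicities, so a root of multiplicity $a_i>1$ could in principle split under the tail; to guarantee that $x_i$ survives as a single zero of exact multiplicity $a_i$ one should correct the parameters by an implicit-function (versal-deformation) argument. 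The paper's own proof ignores this issue entirely, so your write-up is, if anything, closer to a complete argument than the original.
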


\begin{proof}
First, we consider cases A1.$m$ and A2.$n$. Take a set of integers $\{ a_1, a_2, \ldots, a_k\}$ as described above in the respective case, and select small $x_1 < x_2 < \ldots < x_k$ (this means, that $|x_i| < \varepsilon$ for some $\varepsilon$). Then, construct the following  polynomial:
\begin{equation}\label{proof_high}
    P(y) = (y - x_1)^{a_1}(y - x_2)^{a_2} \ldots (y - x_k)^{a_k}.
\end{equation}
It has $k$ roots at coordinates $x_i$ with multiplicity $a_i$ each. If we open the parentheses in formula (\ref{proof_high}), the resulting polynomial will be a small perturbation of the order-$(m + 1)$ polynomial standing in the right-hand side of normal form (\ref{HO_A1}) or the order-$(n + 1)$ polynomial in the left-hand side of normal form (\ref{HO_A2}). The statement of Lemma on equilibria or singularities respectively, directly follows.

Now take case A3.$m,n$ and the respective sets of integers $\{ a_1, a_2, \ldots, a_k\}$ and $\{ b_1, b_2, \ldots, b_l\}$ and coordinates: $x_1 < x_2 < \ldots < x_k$ and $y_1 < y_2 < \ldots < y_l$. We construct two polynomials:
\begin{equation}\label{proof_high_1}
   \begin{array}{l}
    P(y) = (y - x_1)^{a_1}(y - x_2)^{a_2} \ldots (y - x_k)^{a_k}, \\ 
    Q(y) = (y - y_1)^{b_1}(y - y_2)^{b_2} \ldots (y - y_l)^{b_l}.
    \end{array}
\end{equation}
Polynomial $P(y)$ is the small perturbation of the right-hand side and polynomial $Q(y)$ is the small perturbation of the left-hand side of normal form (\ref{HO_A3}). At the same time, this system possesses equilibria, singularities and singular equilibria exactly as described in the Lemma.
\end{proof}

\section{Quasilinear DAEs: Two-dimensional case}\label{sec:2D}

Consider the two-dimensional quasilinear DAEs of form (\ref{eq:qdae}), where $A$ is  everywhere nonsingular except on the singular set  $\Sigma_{\a}$. The simplest possible form of such DAEs is given by (cf. \cite{B_2002}) 
\begin{equation}\label{eq:dds_2D_g}
\begin{cases}
g(x,y,\a)\dot x=f_1(x,y,\a)\\
\dot y=f_2(x,y,\a), 
\end{cases}
\end{equation}
for $(x,y)\in \br^2$, $\a\in \br^m$ and $g:\br^2\times \br^m \to \br$, $f_1,f_2:\br^2\times \br^m\to \br$ are smooth functions. 

In this case, the singular set
\[\Sigma=\{(x,y):g(x,y,\a)=0\}\]
is the zero curve of $g$. This curve is the boundary of two domains:
$$
\Sigma^+ = \{(x,y):g(x,y,\a)>0\}, \q 
\Sigma^- = \{(x,y):g(x,y,\a)<0\}.
$$
We assume that $g$ is such that $g_x$ has finitely many zeros on $\Sigma$. This means that $\nabla g$ is zero also at finitely many points of $\Sigma$. Every point of $\Sigma$ with $\nabla g \neq 0$ belongs to the closure of both $\Sigma^+$ and $\Sigma^-$.

In order to describe the dynamics of such a two-dimensional system, we introduce the basic dynamical elements, such as special points and cycles. 

\begin{Def}\label{Def:eq}\rm
A point $(x,y)$ is called
\begin{itemize}
\item an {\it equilibrium} if $f_1(x,y,\a)=f_2(x,y,\a)=0$; and $g(x,y,\a)\neq 0$; 
\item a {\it singular equilibrium} if $g(x,y,\a)=f_1(x,y,\a)=0$.
\item a {\it fold point} or a {\it fold}, if $g(x,y,\a)=g_x(x,y,\a)=0$; 
\end{itemize}
\end{Def}

\begin{rmk}\rm
The fold points given by Definition \ref{Def:eq} are also referred to as \textit{non-standard algebraic singular points} in DAE terminology (cf. \cite{RR_1994,RZ_2001}). The singular equilibria are also called as non-standard in some literature, e.g. in \cite{RR_1994}, or standard (as extended in \cite{RZ_2001}) geometric singular points.
\end{rmk}

Making time transformation $d\tau:=\frac{dt}{g}$ for $g\ne 0$ in (\ref{eq:dds_2D_g}), one obtains the {\it desingularized} system:
	\begin{equation}\label{eq:dds_2D_g_dis}
	\begin{cases}
	\dot x=f_1(x,y,\a)\\
	\dot y=f_2(x,y,\a)g(x,y,\a).
	\end{cases}
	\end{equation}
The correspondence between the systems is the following: every trajectory $\Gamma$ of system (\ref{eq:dds_2D_g_dis}) contains one or more trajectories of system (\ref{eq:dds_2D_g}). The intersection points of $\Gamma$ with singularity curve $g = 0$ (if any) split $\Gamma$ into connected pieces $\Gamma_1, \Gamma_2, \ldots$ that are  trajectories of (\ref{eq:dds_2D_g}) with the same direction of time as $\Gamma$ for every $\Gamma_i \subset \Sigma^+$ and with the opposite direction of time if $\Gamma_i \subset \Sigma^-$. It is clear that both equilibria and singular equilibria of system (\ref{eq:dds_2D_g}), given by Definition~\ref{Def:eq}, are equilibria of the ODE system (\ref{eq:dds_2D_g_dis}) in the usual sense.

\begin{rmk}\rm
Transforming the time by formula $d\tau:=\frac{dt}{-g}$ creates another desingularized system, in which the time flows in the opposite direction on every trajectory of (\ref{eq:dds_2D_g_dis}). 
The trajectories of system (\ref{eq:dds_2D_g}) follow the trajectories of this system in $\Sigma^-$ and flow in the opposite direction in $\Sigma^+$.
\end{rmk}

 \begin{figure}[!htb]
 	\centerline{
 		\includegraphics[width=1\textwidth]{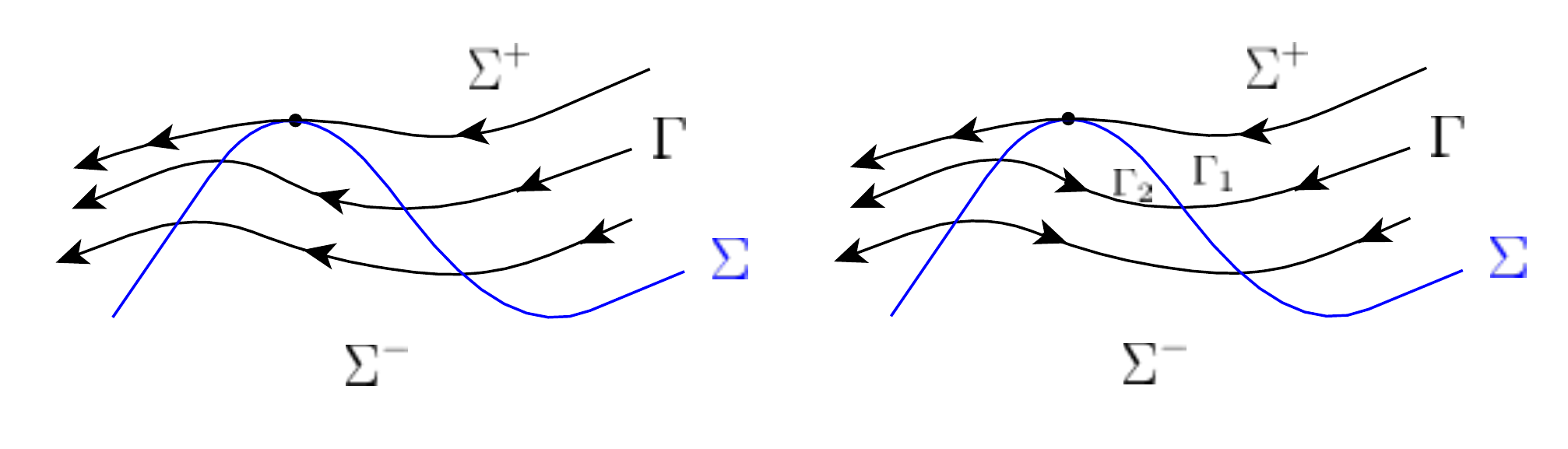}
 	} 
 	\caption{Left: the flow of desingularized ODE system (\ref{eq:dds_2D_g_dis}); Right: the flow of the original system. The part of $\Gamma$ lying in $\Sigma^-$ reverses time. The solid points indicate simple fold points where no change of flow direction is detected.}
 	\label{F:desing}
 \end{figure}

Consider  a point $M \in \Sigma$ and trajectory $\Gamma \ni M$ of ODE system (\ref{eq:dds_2D_g_dis}), see \cite{Rabier} for details. The trajectory will intersect $\Sigma$ transversely if it is not an equilibrium state and if its tangent vector $(f_1, f_2 \cdot g)^\top|_M = (f_1, 0)^\top$ is not orthogonal to $\nabla g = (g_x, g_y)^\top$, i.e. $f_1 \cdot g_x \neq 0$. Locally, $\Gamma$ is split by $M$ into two components, $\Gamma_1$ and $\Gamma_2$ such that $M = \Gamma(0)$, $\Gamma_1 \subset \Gamma(t)$ for $t < 0$ and $\Gamma_2 \subset \Gamma(t)$ for $t > 0$. 
If $f_1 \cdot g_x > 0$, then $\Gamma$ crosses $\Sigma$ from $\Sigma^-$ to $\Sigma^+$, $\Gamma_2 \subset \Sigma^+$ is the trajectory of (\ref{eq:dds_2D_g}), and $\Gamma_1 \subset \Sigma^-$ is the trajectory of (\ref{eq:dds_2D_g}) with time reversal. In this case we call point $M$ {\em outgoing}. When $f_1 \cdot g_x < 0$, the direction of time is preserved on $\Gamma_1 \subset \Sigma^+$ and reversed on $\Gamma_2 \subset \Sigma^-$, and point $M$ is called {\em incoming}. 
Inequalities $f_1 \cdot g_x > 0$ and $f_1 \cdot g_x < 0$ are open conditions, thus curve $\Sigma$ consists of incoming $\Sigma^{inc}$ and outgoing $\Sigma^{out}$ zones, separated by points where $f_1 \cdot g_x = 0$, those are either singular equilibria $f_1 = 0$ or fold points $g_x = 0$.


\begin{Def}\rm
A limit cycle of system (\ref{eq:dds_2D_g_dis}) is called a {\it limit cycle} of system (\ref{eq:dds_2D_g}), if it has no intersections with the singularity curve $\Sigma$. Otherwise, it is called a {\it folded limit cycle}.
\end{Def}

The limit cycle is a periodic orbit of system (\ref{eq:dds_2D_g}). A folded limit cycle consists of more than one  orbit of system (\ref{eq:dds_2D_g}).


\subsection{Structurally stable objects}\label{sec:str_stable}

\begin{Def}\rm
An equilibrium of system (\ref{eq:dds_2D_g}) is called {\it simple} or {\it hyperbolic}, if the linearization matrix of desingularized system (\ref{eq:dds_2D_g_dis}) in this point does not have eigenvalues on the imaginary axis.
\end{Def}

\begin{Def}\rm 
	A singular equilibrium $(x_0, y_0)$ of system (\ref{eq:dds_2D_g}) is called {\it simple}, if the following inequalities are fulfilled:
	\begin{equation}
	f_2(x_0, y_0, \a) \neq 0, \q g_x(x_0, y_0, \a) \neq 0, \q 
	\det \left.\frac{\partial (f_1,g)}{\partial(x,y)}\right|_{(x_0, y_0, \a)}\ne 0,
	\end{equation}
	and the linearization matrix of desingularized system (\ref{eq:dds_2D_g_dis}) in this point does not have eigenvalues on the imaginary axis.
\end{Def}

\begin{Def}\rm
	A fold $(x_0, y_0)$ of system (\ref{eq:dds_2D_g}) is called {\it simple}, if the following inequalities are fulfilled in it:
	\begin{equation}
	f_1(x_0, y_0, \a) \neq 0, \q g_y(x_0, y_0, \a) \neq 0, \q 
	g_{xx}(x_0, y_0, \a) \ne 0.
	\end{equation}
\end{Def}

\subsubsection{Simple Equilibria}\label{subsubsec:simple_eq}
Simple equilibria lie outside the singularity curve. A topological type of an equilibrium $M$ is determined by eigenvalues $\lambda_1$ and $\lambda_2$ of the linearization matrix
\begin{equation}\label{linear_eq}
  A_{EQ} = \left(
\begin{array}{cc}
f_{1x} & f_{1y}\\
gf_{2x} & gf_{2y}
\end{array}
\right).  
\end{equation}
For real $\lambda_1$ and $\lambda_2$, $M$ is saddle if $\lambda_1 \lambda_2 < 0$ and node if $\lambda_1 \lambda_2 > 0$. If the eigenvalues are a complex-conjugate pair, the equilibrium is a focus. A node or a focus $M$ is stable if
\begin{equation}
    M \in \Sigma^+, \Re \lambda_{1,2} < 0 \q \text{or} \q
    M \in \Sigma^-, \Re \lambda_{1,2} > 0, 
\end{equation}
and unstable {if
\begin{equation}
    M \in \Sigma^+, \Re \lambda_{1,2} > 0 \q \text{or} \q
    M \in \Sigma^-, \Re \lambda_{1,2} < 0. 
\end{equation}}

Simple equilibria persist under small perturbations, because they remain equilibria and retain their topological type in the desingularized system. Thus, in the original system, they lie outside the singularity curve $\Sigma$, and their topological type also does not change.

\subsubsection{Simple Singular Equilibria}\label{subsubsec:simple_seq}

In a similar way we classify simple singular equilibria using eigenvalues $\lambda_{1,2}$ of linearization matrix
\begin{equation}\label{linear_s_eq}
  A_{sEQ} = \left(
\begin{array}{cc}
f_{1x} & f_{1y}\\
g_xf_{2} & g_yf_{2}
\end{array}
\right).  
\end{equation}
%
\begin{Def}\rm Let $\lam_{1,2}$ be the eigenvalues of $A_{sEQ}$ in (\ref{linear_s_eq}) evaluated at a simple singular equilibrium $M$ of  (\ref{eq:dds_2D_g}). Then, $M$ is called a {\it folded node}, if $\lambda_{1,2} \in \mathbb{R}$ and $\lambda_1 \lambda_2 > 0$; a {\it folded saddle}, if $\lambda_{1,2} \in \mathbb{R}$ and $\lambda_1 \lambda_2 < 0$; and a {\it folded focus}, if $\lambda_{1,2} \notin \mathbb{R}$.
\end{Def}

The dynamics near a folded node and a folded saddle is determined by eigendirections corresponding to eigenvalues $\lambda_{1,2}$ and respective invariant manifolds. The following lemma states that these eigendirections are never tangent to $\Sigma$ at simple singular equilibria.

\begin{lem}\label{transversal}
In a folded node and a folded saddle, the eigendirections, corresponding to eigenvalues $\lambda_{1,2}$ are transverse to the singularity curve $\Sigma$.
\end{lem}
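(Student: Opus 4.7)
The plan is to argue by contradiction. Since $g_x(x_0,y_0,\alpha)\neq 0$ at a simple singular equilibrium, the gradient $\nabla g|_M=(g_x,g_y)^\top$ is nonzero, so $\Sigma$ is locally a smooth curve at $M$ with tangent direction spanned by $\tau:=(-g_y,g_x)^\top$. I suppose for contradiction that some eigenvector of $A_{sEQ}$ is a nonzero scalar multiple of $\tau$, and derive a contradiction with the eigenvalue being nonzero.

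The key structural observation is that the second row of $A_{sEQ}$ is precisely $f_2\nabla g^\top=(f_2g_x,f_2g_y)$, which is orthogonal to $\tau$. A direct computation then gives
\[
A_{sEQ}\tau=\begin{pmatrix}f_{1y}g_x-f_{1x}g_y\\ 0\end{pmatrix}.
\]
If $\tau$ were an eigenvector with eigenvalue $\lambda$, then $A_{sEQ}\tau=\lambda\tau=(-\lambda g_y,\lambda g_x)^\top$, and comparing second components would force $\lambda g_x=0$, hence $\lambda=0$ since $g_x\neq 0$.

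It remains to exclude $\lambda=0$, i.e.\ to show $\det A_{sEQ}\neq 0$. A direct expansion gives
\[
\det A_{sEQ}=f_2\bigl(f_{1x}g_y-f_{1y}g_x\bigr)=f_2\cdot\det\frac{\partial(f_1,g)}{\partial(x,y)}\bigg|_M,
\]
which is nonzero by the two non-degeneracy conditions in the definition of a simple singular equilibrium ($f_2\neq 0$ and the Jacobian of $(f_1,g)$ is invertible). In particular $\lambda_1\lambda_2\neq 0$, contradicting $\lambda=0$, so no eigendirection is tangent to $\Sigma$. This covers both the folded node and the folded saddle, since in both cases $\lambda_{1,2}$ are real and nonzero.

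There is no serious technical obstacle; the argument is a short linear-algebraic manipulation. The only conceptual ingredient is noticing that the bottom row of $A_{sEQ}$ is $f_2$ times the normal to $\Sigma$, so $A_{sEQ}$ sends vectors tangent to $\Sigma$ into the horizontal axis. This is the geometric reason transversality holds as soon as the two non-degeneracy conditions in the definition of a simple singular equilibrium guarantee $\det A_{sEQ}\neq 0$.
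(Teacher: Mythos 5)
Your proof is correct and follows essentially the same route as the paper: assume an eigenvector of $A_{sEQ}$ is tangent to $\Sigma$, apply the matrix to the tangent vector, and read off $\lambda g_x=0$ from the second component to force a contradiction with simplicity. The only difference is cosmetic --- you make the exclusion of $\lambda=0$ explicit via $\det A_{sEQ}=f_2\,\Delta_2\neq 0$, whereas the paper simply invokes the definition of a simple singular equilibrium, which already forbids eigenvalues on the imaginary axis.
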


\begin{proof}
We prove the lemma by contradiction. Assume that for some eigenvalue $\lambda_1$ its eigenvector is tangent to $\Sigma$. Then, tangent to $\Sigma$ vector $(g_y, -g_x)^\top$ is the eigenvector of matrix $A_{sEQ}$:
$$
\left(\begin{array}{cc}
     f_{1x} - \lambda_1 & f_{1y}  \\
     g_x f_2 & g_y f_2 - \lambda_1 
\end{array} \right)
\left(\begin{array}{c}
     g_y  \\
     -g_x 
\end{array} \right) = 
\left(\begin{array}{c}
     f_{1x}g_y - f_{1y} g_x - \lambda_1 g_{y}  \\
     g_x \lambda_1
\end{array} \right) = 0,
$$
which implies either $g_x = 0$ or $\lambda_1 = 0$, both conditions contradict the assumption that the considered singular equilibrium is simple.
\end{proof}

To describe the dynamical properties of all three types of singular equilibria, we consider a small neighbourhood $U$ of $M$. Locally, $U$ is divided by curve $\Sigma$ into two disconnected parts {given by} $\Sigma^+_{loc} \subset \Sigma^+$ and $\Sigma^-_{loc} \subset \Sigma^-$. 

\textbf{Folded node.} Consider a folded node $M$ with $\lambda_2 < \lambda_1 < 0$ (for the case $0 < \lambda_1 < \lambda_2$ the statement will be the same with $\Sigma^+$ and $\Sigma^-$ interchanged). It has a leading direction $e^L$ defined by the eigenvector corresponding to $\lambda_1$ and a non-leading direction $e^{nL}$ defined by the eigenvector corresponding to $\lambda_2$. There exists a semi-stable smooth invariant manifold $W^{nL}(M)$ tangent to $e^{nL}$ at point $M$. Its existence follows from the desingularized system (\ref{eq:dds_2D_g_dis}), this system has at $M$ a stable or a completely unstable node equilibrium, that possesses a smooth strong stable (unstable) manifold. After coming back to the original system, a part of this manifold lying in $\Sigma^-$ changes the direction of time, so that the manifold becomes a semi-stable manifold $W^{nL}(M)$.

Manifold $W^{nL}(M)$ and $\Sigma$ intersect transversely according to Lemma~\ref{transversal}. They divide neighbourhood $U$ into four sectors, we will call them incoming, stable, outgoing and unstable.
The \textbf{incoming} sector lies in $\Sigma^+_{loc}$ every initial condition from this sector reaches $\Sigma^{inc}$ in forward time and leave $U$ in backward time. The \textbf{stable} sector also lies in $\Sigma^+_{loc}$ and contains the leading direction $e^L$. The trajectories from this sector reach $M$ tangent to $e^L$ in  forward time and reach $\Sigma^{out}$ in  backward time. 

In a similar way, we describe the dynamics in $\Sigma^-_{loc}$: it is divided by $W^{nL}(M)$ into unstable and outgoing sectors. In the \textbf{outgoing} sector trajectories leave $U$ in forward time and reach $\Sigma^{out}$ in backward time. In the \textbf{unstable} sector the trajectories reach $\Sigma^{inc}$ in forward time and point $M$ in backward time. All four types of behavior are illustrated at Figure \ref{F:folded_node}.

 \begin{figure}[!htb]
	\centerline{
		\includegraphics[width=.4\textwidth]{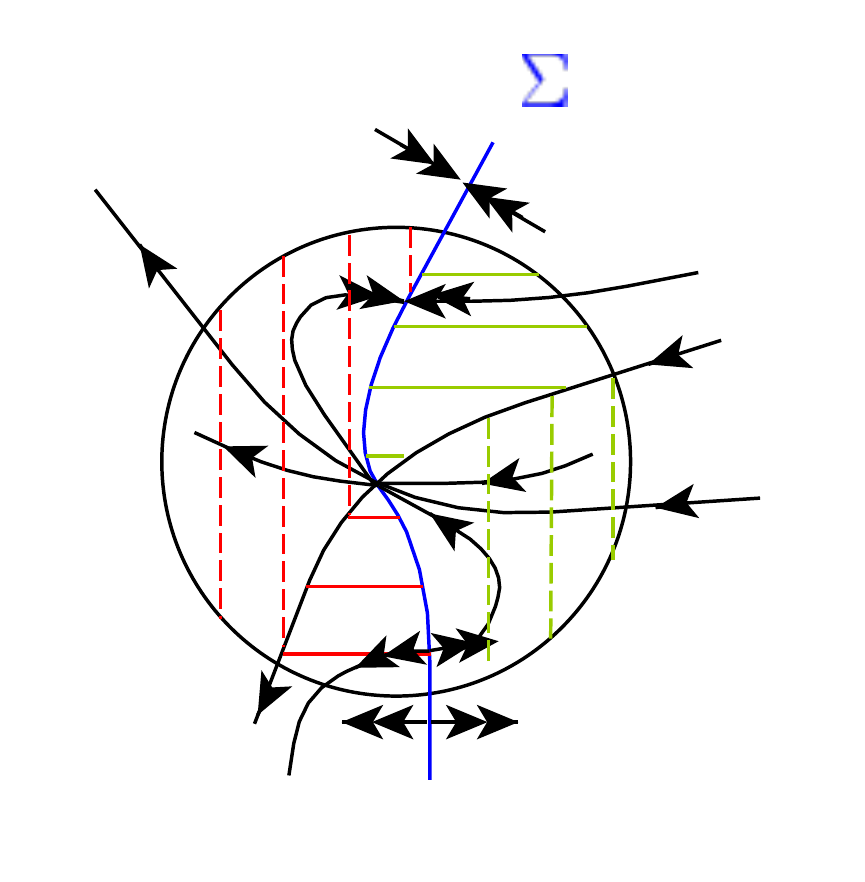}
	} 
	\caption{Dynamics around a simple singular equilibrium: the case of a folded node. The solid green and red lines mark the incoming and outgoing sectors, respectively. The dashed green and red  lines mark the stable and unstable sectors, respectively.}
	\label{F:folded_node}
\end{figure}

\textbf{Folded saddle.} In a folded saddle $M$ the eigenvalues of the desingularized linearization matrix are $\lambda_1 < 0 < \lambda_2$. Point $M$ belongs to two smooth invariant manifolds: $W^-$ tangent to eigendirection $e^-$ corresponding to $\lambda_1$ and $W^+$ tangent to eigendirection $e^+$ corresponding to $\lambda_2$. They divide  $\Sigma^+$ into three sectors: incoming, saddle and outgoing. The incoming sector is bounded by $\Sigma^{inc}$ and $W^-$, all orbits from it reach $\Sigma_{inc}$ in forward time and leave $U$ in backward time. The saddle sector is bounded by $W^+$ and $W^-$, all orbits leave $U$ in both directions of time. The outgoing sector is bounded by $\Sigma^{out}$ and $W^+$, the orbits in it leave $U$ in forward time and reach $\Sigma^{out}$ in backward time. All above describes the dynamics also in $\Sigma^-$, where the outgoing sector is bounded by $\Sigma^{out}$ and $W^-$ and the incoming sector is bounded by $\Sigma^{inc}$ and $W^+$. The stable manifold of $M$ is $W^s(M) = M \cup (W^- \cap \Sigma^+) \cup (W^+ \cap \Sigma^-)$, the unstable is $W^u(M) = M \cup (W^+ \cap \Sigma^+) \cup (W^- \cap \Sigma^-)$, both manifolds are $C^0$ in $M$. See Figure \ref{F:folded_saddle}.

 \begin{figure}[!htb]
	\centerline{
		\includegraphics[width=.4\textwidth]{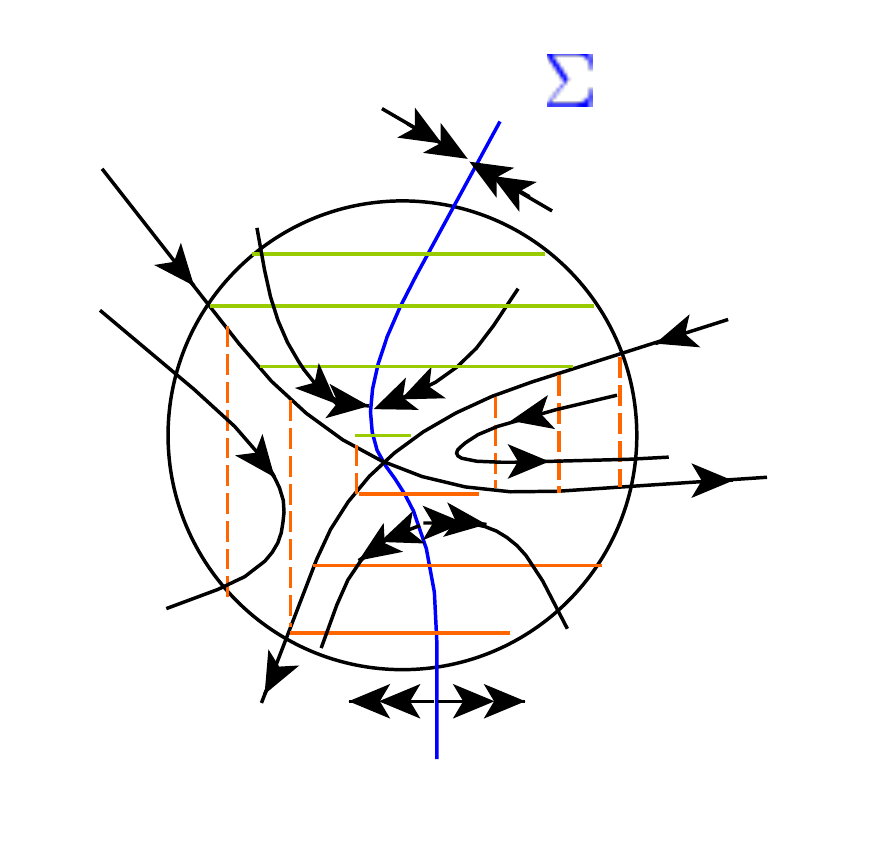}
	} 
	\caption{Dynamics around a simple singular equilibrium: the case of a folded saddle. The solid green and red lines mark the incoming and outgoing sectors, respectively. The dashed red  lines mark the unstable sector.}
	\label{F:folded_saddle}
\end{figure}

\textbf{Folded focus.} Near a folded focus all orbits reach $\Sigma^{out}$ in backward time and $\Sigma^{inc}$ in forward time. See Figure \ref{F:folded_focus}.

 \begin{figure}[!htb]
	\centerline{
		\includegraphics[width=.3\textwidth]{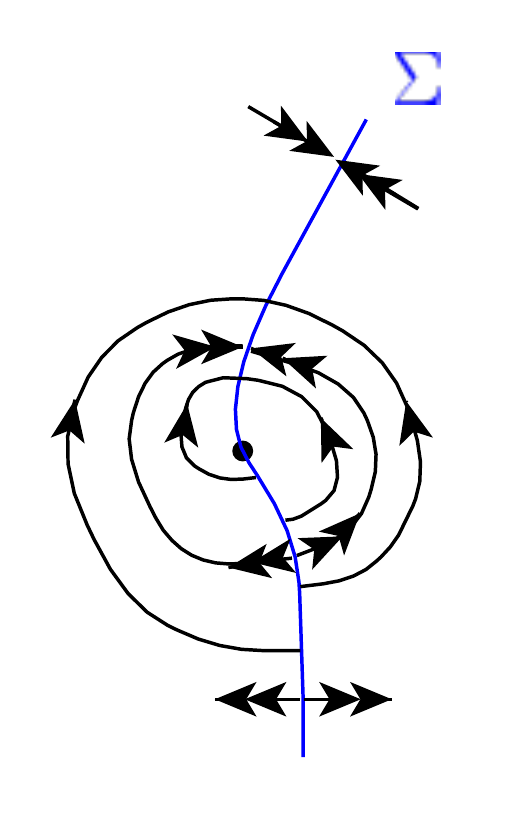}
	} 
	\caption{Dynamics around a simple singular equilibrium: the case of a folded focus.}
	\label{F:folded_focus}
\end{figure}

Under small (smooth) perturbations, simple singular equilibria persist and retain their topological type. The reasons of it are that matrix (\ref{linear_s_eq}) is non-degenerate at such point, and by the Implicit Function Theorem equation $g = 0, f_1 = 0$ has a unique solution for small $\alpha$, provided that it exists for $\alpha = 0$. Also, as the topological type of such an equilibrium persists in the desingularized system, it also persists in the original one.

\subsubsection{Simple Fold}\label{subsubsec:simple_fp}

Near the simple fold point $M(x_0, y_0)$, where $(g = g_x = 0, \; g_{xx}\neq 0, \; g_y \neq 0)$, equation $g(x, y) = 0$ of the singularity curve $\Sigma$ can be locally explicitly resolved as a function 
$\displaystyle y = \psi(x) = - \frac{g_{xx}(M)}{g_y(M)} x^2 + O(x^3)$, $y_0 = \psi(x_0)$. The fold point is a local maximum or minimum of this function.
Consider the desingularized system (\ref{eq:dds_2D_g_dis}) and its solution with initial condition $(x_0, y_0)$. At the point $M$ its $y$ component $f_2(x_0, y_0) g(x_0, y_0)$ vanishes, thus the trajectory is tangent to $\Sigma$, see Fig.~\ref{F:desing}. 

The simple fold is either $\Sigma^+$-convex, when 
$g_{xx}(M) g_y(M) > 0$ and $\Sigma^-$-convex, when 
$g_{xx}(M) g_y(M) < 0$.
Also, it persists under small perturbations. Indeed, for system of equations $g = g_x = 0$ we have
\begin{equation}\label{IFT_Fold}
    \displaystyle \frac{\partial (g, g_x)}{\partial (x, y)}(x_0, y_0) = 
    \left(\begin{array}{cc}
     0 & g_{y}  \\
     g_{xx} & g_{xy}\end{array} \right) (x_0, y_0) = 
     -g_{xx}(x_0. y_0) g_y(x_0, y_0) \neq 0
\end{equation}
so that by the Implicit Function Theorem the system can be uniquely solved with respect to $(x, y)$ for all small $\alpha$. This solution gives a unique fold point in a small neighbourhood of point $M$. In addition, condition $f_1(x, y) \neq 0$ is fulfilled in some small neighbourhood of the fold point, also for small $\a$, thus no other objects (regular or singular equilibria) appear there under small perturbations.

\subsubsection{Regular and folded limit cycles}

The limit cycles, regular and folded, both correspond to limit cycles of the desingularized system (\ref{eq:dds_2D_g_dis}). By standard methods (the Poincar\'e crossection) one defines the multiplier $\mu > 0$ of such an orbit. A regular cycle is simple (structurally stable), if its multiplier differs from one. A folded cycle is structurally stable also if $\mu \neq 1$, and, in addition, it intersects the singularity curve only transversely. The possible types of simple limit cycles are illustrated in Fig.~\ref{F:2d_cycle}.

\begin{figure}[!htb]
	\includegraphics[angle=90,origin=c,width=.32\textwidth]{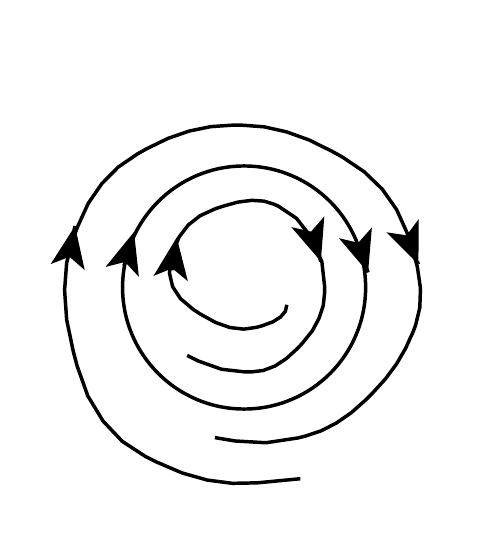}
	\vskip-.1cm\hskip.5cm
	\includegraphics[angle=90,origin=c,width=.3\textwidth]{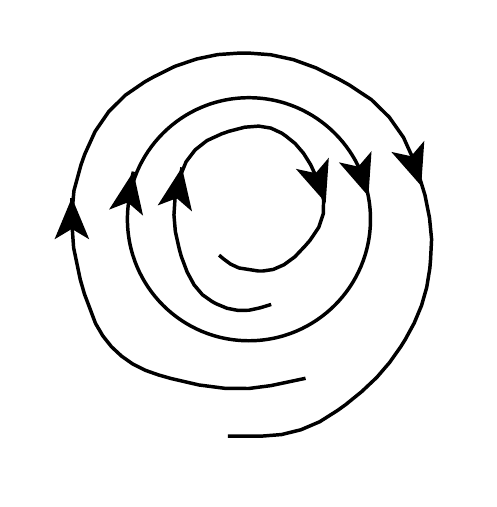}		
	\vskip-4.5cm\hskip6cm
\includegraphics[angle=90,origin=c,width=.36\textwidth]{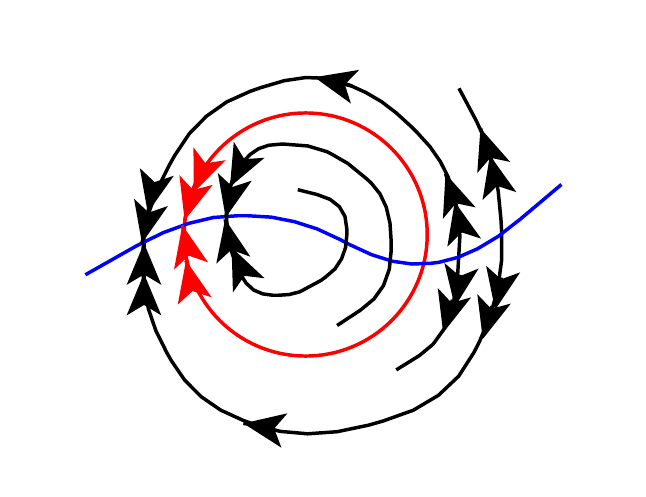}			\vskip-3.9cm\hskip9.8cm
	\includegraphics[width=.1\textwidth]{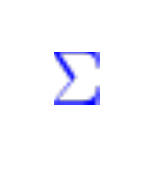}	
	\vskip3cm
	\caption{\label{F:2d_cycle} Dynamics around a stable limit cycle (top left), an unstable limit cycle (bottom left) and a folded limit cycle (right) of (\ref{eq:dds_2D_g}). }
	
\end{figure}


\subsection{Bifurcations}

The bifurcations in two-dimensional DAEs are divided in three main groups: geometric (bifurcations of the singularity set), local and global bifurcations. 

\subsubsection{Geometric bifurcations}

Geometric bifurcations are related to the reconstruction of the topology of the singularity set $\Sigma$. It means that after arbitrary small perturbations the set $\Sigma$ is topologically not equivalent to itself at the initial parameter value.
This happens when its branches appear, disappear or interact with each other.
Among codimension-one bifurcations, there are those, related to failure of local existence of a unique branch of $\Sigma$, i.e. existence of a point $(x, y)$, where $\nabla g(x,y) = 0$. At the same time, the Hessian should be non-zero at the bifurcation moment, so that the codimension is not higher than one: 
\begin{equation}\label{top_bif}
   \nabla g(x,y) = 0, \;\; \det D^2 g(x, y) \neq 0 
\end{equation}

Depending on the sign of the Hessian, two cases are possible \cite{Golubitsky_1985}:

  \textbf {T1. Hyperbolic bifurcation.} $g_x = g_y = 0$ and $D^2 g(x,y) < 0$. For example, $g(x,y,\a)=x^2-y^2-\a$ at $(0,0,0)$. See Figure \ref{F:2d_topo_hyp}.
    \begin{figure}[!htb]
\centerline{
 		\includegraphics[width=.8\textwidth]{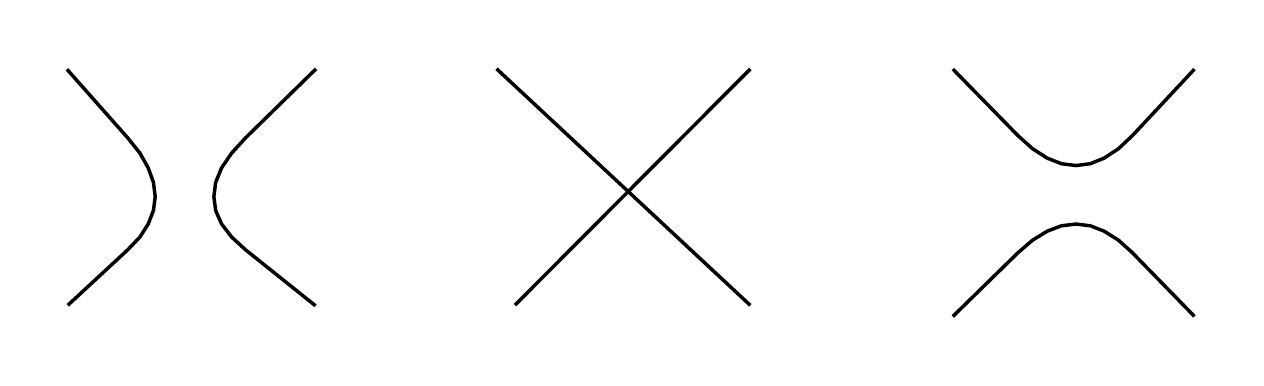}
 	} 
 	\caption{Hyperbolic singularity curve bifurcation of (\ref{eq:dds_2D_g}). }
	\label{F:2d_topo_hyp}
 \end{figure}

\textbf{T2. Elliptic bifurcation.} $g_x = g_y = 0$ and $D^2 g(x,y) > 0$. For example, $g(x,y,\a)=x^2+y^2-\a$. See Figure \ref{F:2d_topo_elip}.

\begin{figure}[!htb]
\centerline{
 		\includegraphics[width=.7\textwidth]{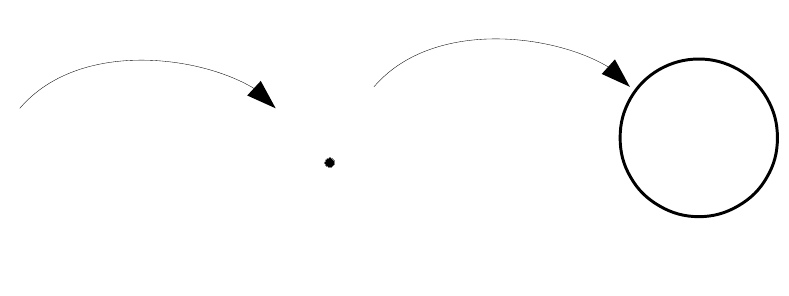}
 	} 
 	\caption{Elliptic singularity curve bifurcation of (\ref{eq:dds_2D_g}). }
 	\vskip-.5cm
	\label{F:2d_topo_elip}
 \end{figure}
 

\subsubsection{Local bifurcations}\label{sec:loc_bif}

Local bifurcations occur to simple objects described in subsection~\ref{sec:str_stable}, when they lose their simple properties. From the list below, bifurcations \textbf{L1, L6, L8} occur outside the singularity curve $\Sigma$ and thus they are unfolded as in regular ODEs. Bifurcations \textbf{L2, L7, L9} involve the curve $\Sigma$, but after the desingularization procedure, they become bifurcations \textbf{L1, L6, L8} respectively, in the ODE system (\ref{eq:dds_2D_g_dis}) and their unfolding can be described accordingly. The rest, bifurcations \textbf{L3--L5}, either are unfolded in a different way in regular systems or do not have regular analogues at all. The latter are studied in detail in Section~\ref{sec:loc_norm}.


\textbf{L1. Saddle-node.} 
      This bifurcation occurs when at the equilibrium point $M$ the eigenvalues of the linearization matrix (\ref{linear_eq}) are $\lambda_1 = 0$ and $\lambda_2 \neq 0$, i.e. when $f_{1x}f_{2y} - f_{1y}f_{2x} = 0$. Like in a regular ODE system, a codimension-one saddle-node under small perturbations either disappears so that in some small neighbourhood there are no equilibria, or splits into two simple equilibria, a saddle and a node.
      
\textbf{L2. Singular saddle-node of type I.} (according to the classification by  C. Kuehn~\cite{CKuehn}). This bifurcation corresponds to the existence of a singular equilibrium, for which linearization matrix 
      (\ref{linear_s_eq}) has eigenvalues $\lambda_1 = 0$ and $\lambda_2 \neq 0$. This happens when $f_{1x}g_y - f_{1y}g_x = 0$. A codimension-one singular equilibrium of type I under small perturbations either disappears or splits into two simple singular equilibria -- a folded saddle and a folded node, see Fig.~\ref{F:vEQ_vEQ}. 
      
\begin{figure}[!htb]
	\centerline{
		\includegraphics[width=.8\textwidth]{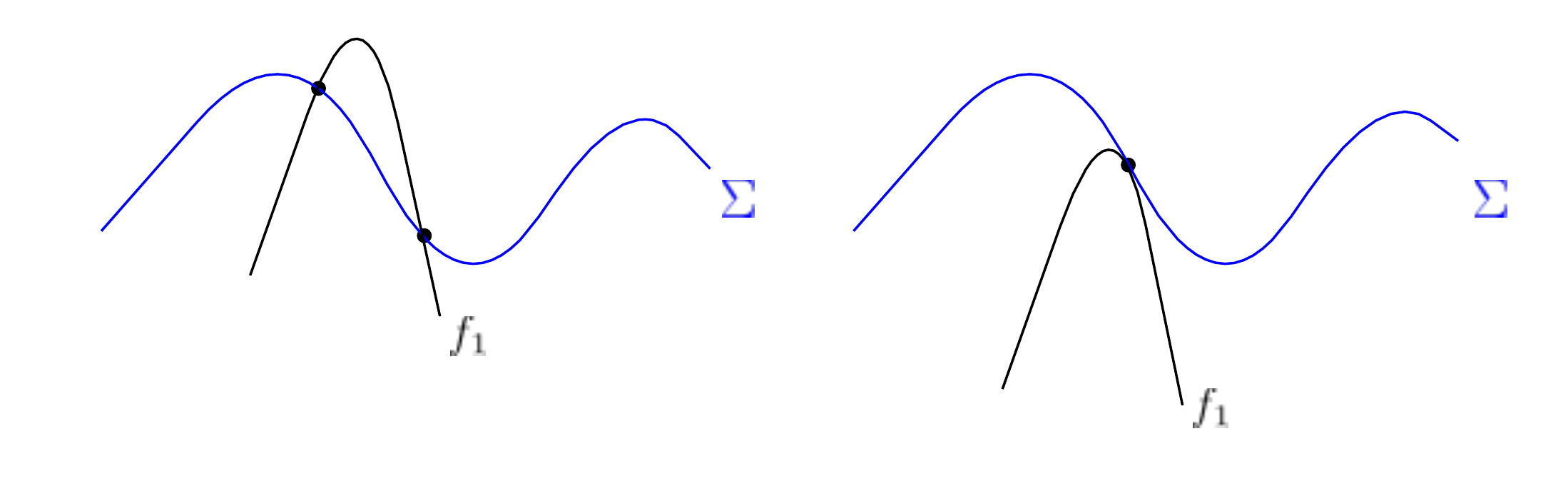}
	} 
	\caption{Singular saddle-node type 1: $g = f_1 = 0$, $f_{1x}g_y - f_{1y}g_x = 0$, $g_{x}\ne 0$.}
	\label{F:vEQ_vEQ}
\end{figure}

    \textbf{L3. Singular saddle-node of type II.} (according to the classification by  C. Kuehn~\cite{CKuehn}). 
      This bifurcation occurs when a  singular equilibrium point $M$ ($f_1 = g = 0$) also satisfies regular equilibrium condition $f_2 = 0$. A small perturbation of a codimension-one singular saddle-node of type II 
      leads to the appearance of a simple equilibrium  and a simple singular equilibrium. They appear in combinations either node + folded saddle, or saddle + folded node. For the derivation of the normal form refer to Lemma \ref{lem:eq_seq_bf} below. The bifurcation is illustrated on Fig.~\ref{F:2d_++}, see also \cite{Venk_1992, Venk_1995, B_1998, B_2000, B_2001, Riaza_2002}, where such a bifurcation was studied.
      
\begin{figure}[!htb]
		\includegraphics[width=1\textwidth]{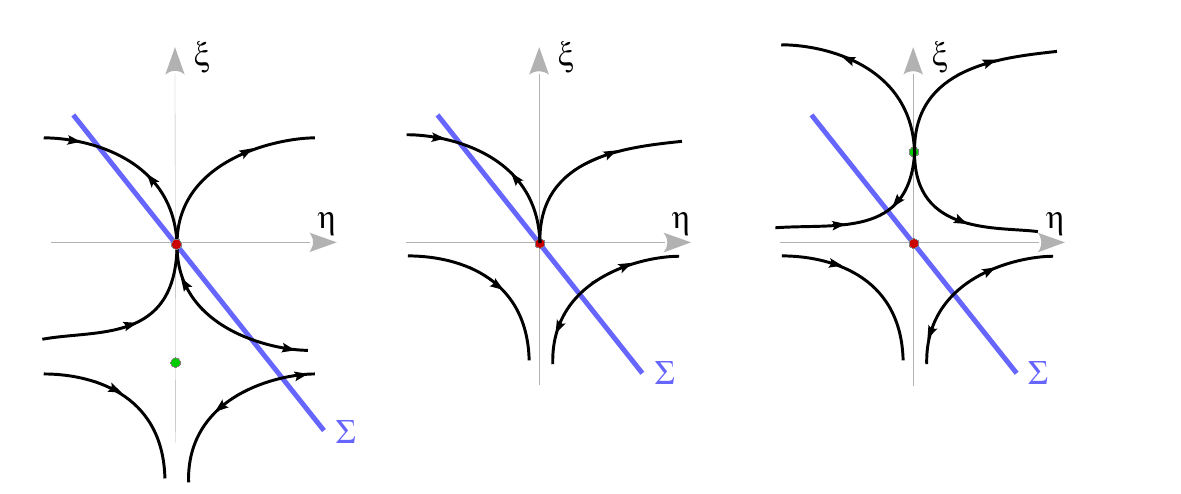}
	\caption{The singular saddle-node bifurcation of type II at $(\xi^*,\eta^*,\a^*)=(0,0,0)$ 
	for  $\Del_4>0$ and $f_{1x}>0$. As $\a$ changes from negative to positive, the local flow changes from the left figure to right figure.} 
	\label{F:2d_++}
\end{figure}
      
\textbf{L4. Cubic fold.} A fold point $g = g_x = 0$, $g_y \neq 0$, is non-simple of codimension one (a cubic fold) if $g_{xx} = 0$ and $g_{xxx} \neq 0$. Under small perturbations the cubic fold generically either disappears or splits into a pair of simple folds with the opposite convexity, as stated by Lemma~\ref{lem:cubic_fold}. The bifurcation is illustrated in Fig.~\ref{F:fold_fold}.
     
\begin{figure}[!htb]
 	\centerline{
 		\includegraphics[width=.8\textwidth]{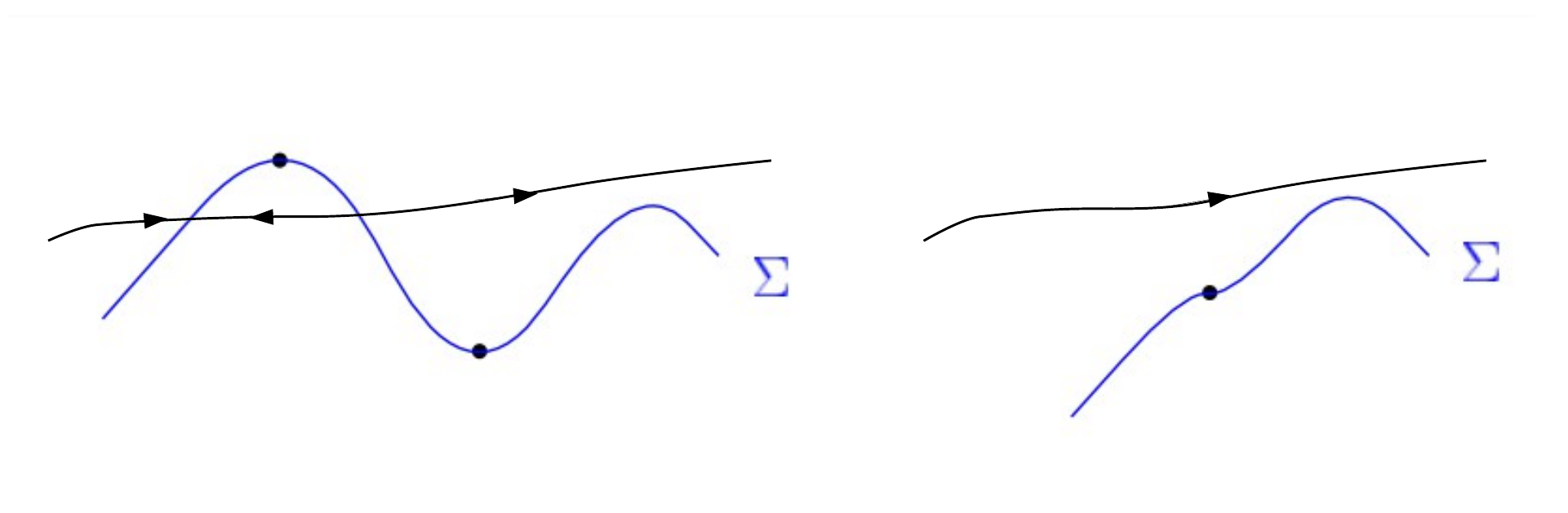}
 	} 
 	\caption{A cubic fold bifurcation $g=g_x=g_{xx}=0$, $f_1\ne 0$.}
 	\label{F:fold_fold}
 \end{figure}
      
\textbf{L5. Singular equilibrium-fold} This bifurcation occurs when at a point $M$ both conditions of a fold and a singular equilibrium are fulfilled, i.e. $g = g_x = f_1 = 0$ and $f_2 \neq 0$, $f_{1x} \neq 0$, $g_{xx} \neq 0$, $g_y \neq 0$. Under small perturbations, a singular equilibrium-fold splits into a simple fold and a simple singular equilibrium, see Lemma~\ref{L:fold_sEQ} for details. The bifurcation is illustrated at  Fig.~\ref{F:fold_vEQ}.

\begin{figure}[!htb]
	\centerline{
		\includegraphics[width=\textwidth]{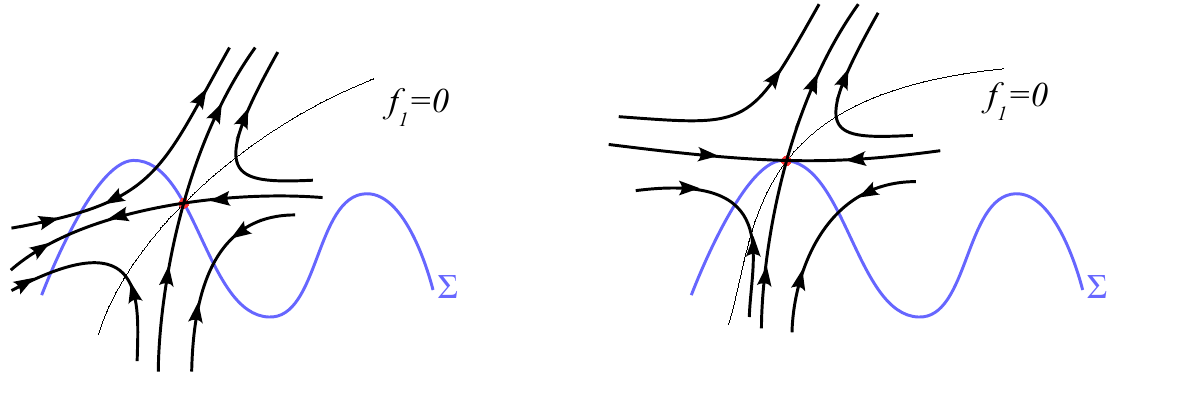}
	} 
	\caption{A singular equilibrium-fold bifurcation $g=g_x=f_1=0$, $g_{xx}\ne 0$.}
	\label{F:fold_vEQ}
\end{figure}

\textbf{L6. Transition between folded node and folded focus.} This bifurcation occurs at a singular equilibrium, when the two eigenvalues of the linearization matrix  (\ref{linear_s_eq}) coincide. In small perturbations they either become a real (folded node) or a complex-conjugated (folded focus) pair of different eigenvalues. Note that the similar transition for a regular equilibrium is not considered a bifurcation -- the local flows around a node and a focus can be topologically conjugated. However, in the case of a singular equilibrium, the local flows near a folded node and a folded focus are not similar: near the folded node there exists a subset of points such that their trajectories reach the singular equilibrium in forward or backward time, while in the neighborhood of a folded focus there are no such orbits (see Subsection~\ref{subsubsec:simple_seq} for details).  
 
\textbf{L7. Andronov-Hopf bifurcation.} This standard bifurcation occurs when a stable (unstable) focus equilibrium has a pair of pure imaginary eigenvalues (of matrix (\ref{linear_eq})). Under small perturbations such a weak focus either becomes a simple stable (unstable) focus, or unstable (stable) focus and a stable (unstable) limit cycle is born.

\textbf{L8. Folded Andronov-Hopf bifurcation.} This bifurcartion takes place, when linearisation matrix (\ref{linear_s_eq}) has a pair of pure imaginary eigenvalues. This bifurcation corresponds to a regular Andronov-Hopf in the desingularized system (\ref{eq:dds_2D_g_dis}), in which a limit cycle is born from a focus equilibrium. In the original system (\ref{eq:dds_2D_g}) under small perturbations a folded limit cycle is born from a folded focus.

\textbf{L9. Double limit cycle.} Existence of a limit cycle with multiplier equal to $+1$. Under small perturbations this cycle either disappears or is split into stable and unstable simple limit cycles.
          
\textbf{L10. Folded double limit cycle.} This bifurcation corresponds to the existence of a double limit cycle in the desingularized system (\ref{eq:dds_2D_g_dis}), that intersects the singularity curve. Under small perturbations  such a cycle either disappears or is split into a pair of simple folded limit cycles.

\begin{figure}[!htb]
		\includegraphics[angle=90,origin=c,width=.4\textwidth]{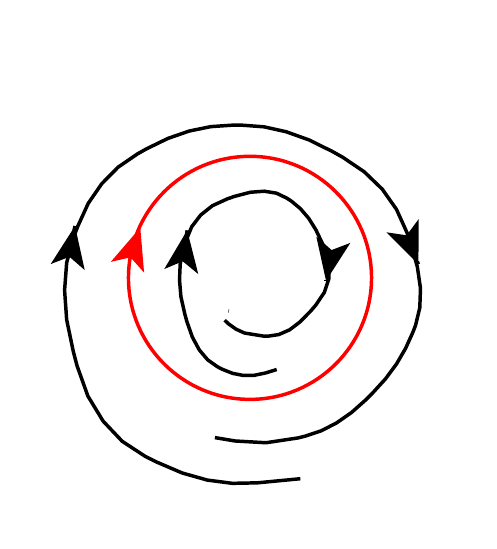}
		\vskip-5cm\hskip6cm
		\includegraphics[angle=90,origin=c,width=.46\textwidth]{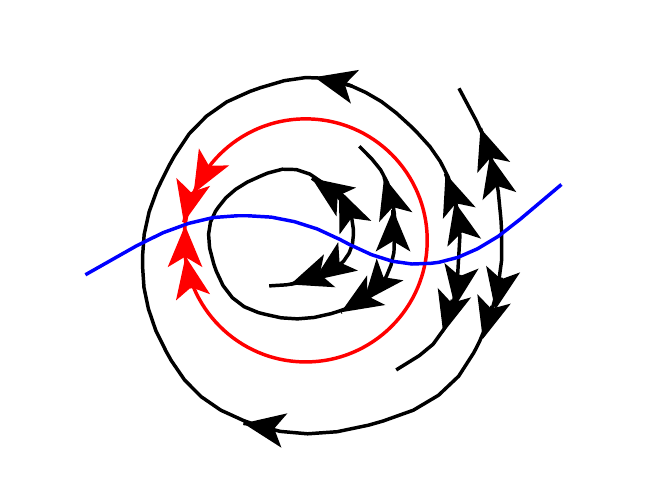}		
		\vskip-3.9cm\hskip10.8cm
\includegraphics[width=.1\textwidth]{2d_semi_stablec.pdf}	
	\vskip2cm
	\caption{Dynamics around a double limit cycle (left) and a folded double limit cycle (right) of (\ref{eq:dds_2D_g}). }

	\label{F:2d_semi_stable}
\end{figure}
      
    
 \subsubsection{Global Bifurcations}    
 This is the class of bifurcations, in which special orbits (homoclinic or heteroclinic) exist in the system. They are usually destroyed by small perturbations. Such orbits can be regular, when they have no intersections with the singularity curve $\Sigma$, or folded if such an intersection point exist.
    
\textbf{G1. A homoclinic orbit to a saddle.} In the desingularized system (\ref{eq:dds_2D_g_dis}) there exists a homoclinic loop $\Gamma$ to a saddle equilibrium, that is also an equilibrium in system (\ref{eq:dds_2D_g}).
        
        \textbf{G1a. Regular.} This is the standard homoclinic bifurcation, when the image of $\Gamma$ does not intersect $\Sigma$. Upon small perturbations it either just disappears, or disappears with the creation of a limit cycle.
        
        \textbf{G1b. Folded.} The image of $\Gamma$ in (\ref{eq:dds_2D_g}) has intersections with $\Sigma$. Under small pertubations, such a homoclinic loop gives rise to a folded limit cycle.
    
\textbf{G2. A homoclinic orbit to a folded saddle} a homoclinic orbit $\Gamma$ exists in the desingularized system (\ref{eq:dds_2D_g_dis}). In the original system (\ref{eq:dds_2D_g}) this equilibrium lies at $\Sigma$
  
  \textbf{G2a. Regular.}
   In the original system (\ref{eq:dds_2D_g}) the image of $\Gamma$ does not intersect $\Sigma$. Under small perturbations, when the folded saddle disappears, a regular limit cycle is born.
  
\textbf{G2b. Folded.} 
   In the original system (\ref{eq:dds_2D_g}) the image of $\Gamma$  has intersections with $\Sigma$. Under small perturbations, when the folded saddle disappears, a folded limit cycle is born.
    
\textbf{G3.  A homoclinic orbit to a fold point.} In the desingularized system (\ref{eq:dds_2D_g_dis}) there exists a simple limit cycle $L$. In the original system (\ref{eq:dds_2D_g}) the image of curve $L$ is tangent to $\Sigma$ at a simple fold point $F$.
     
\textbf{G3a. Regular.}
       The image of curve $L$ does not have common points with $\Sigma$ other than $F$.
       Under small perturbations, cycle $L$ in system (\ref{eq:dds_2D_g_dis}) persists, and in system (\ref{eq:dds_2D_g}) this curve becomes either a regular or a folded limit cycle.
     
\textbf{G3b. Folded.}
The image of curve $L$ have intersections with $\Sigma$ other than $F$.
     Under small perturbations,  cycle $L$ in system (\ref{eq:dds_2D_g_dis}) persists, and in system (\ref{eq:dds_2D_g}) it becomes a folded limit cycle.
        
\textbf{G4. A homoclinic orbit to a saddle-node.} In the desingularized system (\ref{eq:dds_2D_g_dis}) there exists a homoclinic orbit $L$ to a saddle-node equilibrium $M$. In system (\ref{eq:dds_2D_g}) point $M$ does not belong to singularity curve $\Sigma$. Upon the disappearance of the equilibrium a limit cycle is born in system (\ref{eq:dds_2D_g_dis}). 

         \textbf{G4a. Regular.} The image of $L$ in system (\ref{eq:dds_2D_g}) does not intersect the singularity curve $\Sigma$. Upon the disappearance of the equilibrium a limit cycle  is born also in the original system (\ref{eq:dds_2D_g}).
         
         \textbf{G4b. Folded.}  The image of $L$ in system (\ref{eq:dds_2D_g}) intersects transversely the singularity curve $\Sigma$. Upon the disappearance of the equilibrium a folded limit cycle is born.
     
\textbf{G5. A homoclinic orbit to  a singular saddle-node of type I.} In the desingularized system (\ref{eq:dds_2D_g_dis}) there exists a homoclinic orbit $L$ to a saddle-node equilibrium $M$. In system (\ref{eq:dds_2D_g}) point $M$ is a singular equlibrium (a singular saddle-node of type I). Upon the disappearance of the equilibrium a periodic orbit is born in system (\ref{eq:dds_2D_g_dis}). 

         \textbf{G5a. Regular.} The image of $L$ in system (\ref{eq:dds_2D_g}) does not intersect the singularity curve $\Sigma$. Upon the disappearance of the singular equilibrium a limit cycle is born in the original system (\ref{eq:dds_2D_g}).
         
         \textbf{G5b. Folded.}  The image of $L$ in system (\ref{eq:dds_2D_g}) intersects transversely the singularity curve $\Sigma$. Upon the disappearance of the singular equilibrium a folded limit cycle is born.
     
     
\textbf{G6. A heteroclinic connection.} 
This bifurcation corresponds to the existence of such an orbit $L$ in system (\ref{eq:dds_2D_g_dis}) that
      \begin{itemize}
          \item passes through two different fold points of system (\ref{eq:dds_2D_g}). In system (\ref{eq:dds_2D_g}) a piece of $L$ that lies between the fold points, is a heteroclinic connection of two folds (see Fig.~\ref{F:2d_heteroclinic_reg}).
          
          \item passes through one fold point $F$ of system (\ref{eq:dds_2D_g}) and tends to an equilibrium in forward or backward time, without passing through other fold points that differ from $F$. This is a heteroclinic connection of a fold and a regular or singular equilibrium in system (\ref{eq:dds_2D_g})
          
          \item tends to different equilibria in both forward and backward time, without passing through any of folds of system (\ref{eq:dds_2D_g}). This is a heteroclinic connection between two equilibria, two  singular equilibria or between an equilibrium and a singular equilibrium.
      \end{itemize}

All the heteroclinic connections listed above can be either regular, when they do not intersect the singularity curve, or folded, when such an intersection exists. Under small perturbations heteroclinic connections are generically broken.

         
         \begin{figure}[!htb]
         	\centerline{
         		\includegraphics[width=.8\textwidth]{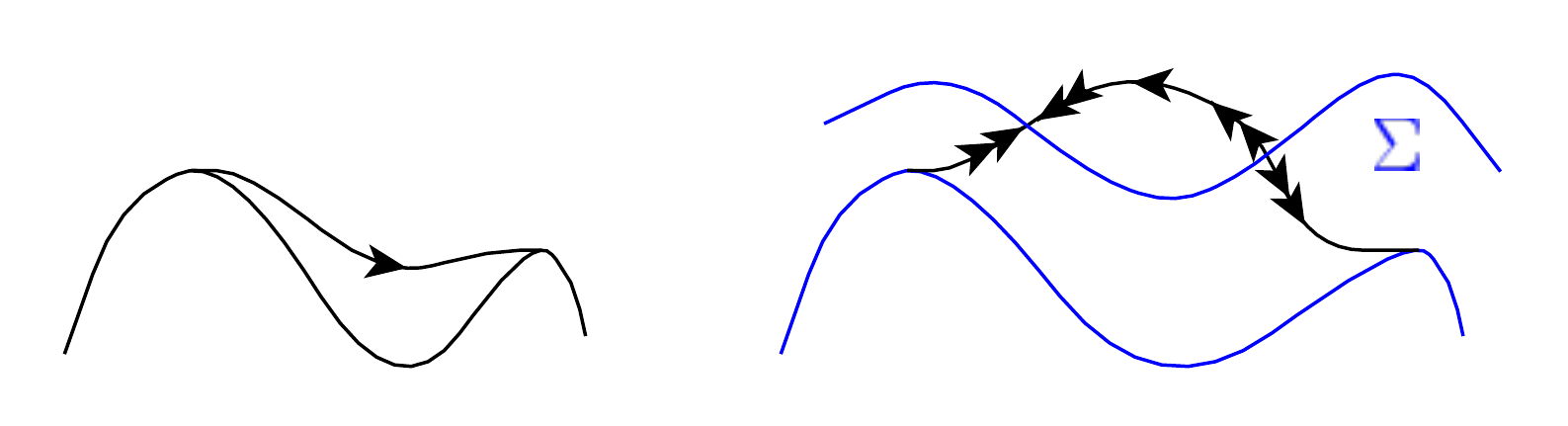}
         	}  
         	\vskip-.2cm
         	\caption{\label{F:2d_heteroclinic_reg} Dynamics around a regular heteroclinic connection (left) and a folded heteroclinic connection (right) of (\ref{eq:dds_2D_g}).}

         \end{figure}


\section{Local bifurcations L3--L5}\label{sec:loc_norm}

In this section the character of those local bifurcations, that do not have analogues in regular ODEs is studied in detail. These bifurcations are \textbf{L3:} Singular saddle-node of type II, \textbf{L4:} Cubic fold and \textbf{L5:} Singular equilibrium-fold from subsection~\ref{sec:loc_bif}. 

We introduce the following notations: 
%
%
\begin{equation}\label{eq:determinants}
\begin{array}{c}
\displaystyle
\Del_1 = \det \left.\frac{\partial (f_1,f_2)}{\partial(x,y)}\right|_{(0,0,0)},\q
%
\Del_2 = \det \left.\frac{\partial (f_1, g)}{\partial(x,y)}\right|_{(0,0,0)},\q 
\Del_3 = \det \left.\frac{\partial 
(g, g_x)}{\partial(y , \a)}
\right|_{(0,0,0)},
\\
%
%
%
\displaystyle
\Del_4 = 
\left.\frac{\partial (f_1,f_2,g)}{\partial(x,y,\a)}\right|_{(0,0,0)} ,\q
%
\Del_5=
\left.\frac{\partial (f_1, g, g_x)}{\partial(x,y,\a)}\right|_{(0,0,0)} .\q
%
	\end{array}
\end{equation}






\subsection{L3: Singular saddle-node of type II} 
This bifurcation occurs at point $M(0, 0)$ when both conditions for an equilibrium and a singular equilibrium are fulfilled in it for $\a = 0$. This means that $g(0,0,0) = f_1(0, 0, 0) = f_2(0, 0, 0) = 0$. We assume that the 
following inequalities also hold, so that the codimension is equal to one and that the parametric family is transversal:
\begin{equation}
\label{noSingFD}
\begin{array}{c}
   
\displaystyle    
f_{1x}(0,0,0)\ne 0, \q g_x(0, 0, 0) \neq 0, \q

\Del_1 \neq 0, \q \Del_2 \neq 0, \q \Del_4 \neq 0.
%
\end{array}
\end{equation}


\begin{lem}\label{lem:eq_seq_bf}
{Assume that genericity (codimension one + transversality) conditions (\ref{noSingFD}) are fulfilled at a singular saddle-node type II point $M$. Then in a generic unfolding point $M$ splits into a pair of structurally stable points, a regular and a singular equilibria. They are either a saddle and a folded node or a node and a folded saddle. 
%
%
}
\end{lem}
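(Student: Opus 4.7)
The plan is to apply the Implicit Function Theorem twice in order to extract, for small $\alpha\ne 0$, a smooth branch of regular equilibria and a smooth branch of singular equilibria both emerging from $M$, and then to compare the determinants of their linearization matrices to read off the topological types.

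First, I invoke the IFT on the system $\{f_1=0,\ f_2=0\}$; since $\Delta_1\ne 0$, this yields a smooth curve $(x_e(\alpha),y_e(\alpha))$ with $x_e(0)=y_e(0)=0$ along which $f_1=f_2\equiv 0$. Applying the IFT instead to $\{f_1=0,\ g=0\}$ and using $\Delta_2\ne 0$, I obtain a second smooth curve $(x_s(\alpha),y_s(\alpha))$ which is the singular-equilibrium branch in the sense of Definition~\ref{Def:eq}. Both pass through $M$ at $\alpha=0$.

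Next, I evaluate the ``defect'' of each branch with respect to the other's defining equation, i.e.\ the function $g$ along the regular branch and $f_2$ along the singular branch. Differentiating the defining equations and solving for the derivatives by Cramer's rule, and then expanding the $3\times 3$ determinant $\Delta_4$ along its last column, I expect to obtain
\begin{equation*}
g(x_e(\alpha),y_e(\alpha),\alpha)=\frac{\Delta_4}{\Delta_1}\,\alpha+O(\alpha^2),\qquad f_2(x_s(\alpha),y_s(\alpha),\alpha)=-\frac{\Delta_4}{\Delta_2}\,\alpha+O(\alpha^2).
\end{equation*}
Since $\Delta_4\ne 0$, both quantities are nonzero for $\alpha\ne 0$, so the two branches are genuinely distinct: the regular branch lies off $\Sigma$ and produces a true equilibrium, while the singular branch lies on $\Sigma$ and produces a singular equilibrium.

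Finally, I classify the two points via the linearizations (\ref{linear_eq}) and (\ref{linear_s_eq}). Along the two branches their determinants factor as $\det A_{EQ}=g(x_e,y_e,\alpha)\,\Delta_1+O(\alpha^2)=\Delta_4\,\alpha+O(\alpha^2)$ and $\det A_{sEQ}=f_2(x_s,y_s,\alpha)\,\Delta_2+O(\alpha^2)=-\Delta_4\,\alpha+O(\alpha^2)$, so their product is strictly negative for every small nonzero $\alpha$. The trace of $A_{EQ}$ equals $f_{1x}(0,0,0)+O(\alpha)$, which is bounded away from zero by the hypothesis $f_{1x}(0,0,0)\ne 0$, so $(\operatorname{tr}A_{EQ})^2-4\det A_{EQ}>0$ for small $\alpha$ and the regular equilibrium is a node or saddle but never a focus. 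Consequently, for each small $\alpha\ne 0$ exactly one of two scenarios holds: $\det A_{EQ}>0$ with $\det A_{sEQ}<0$ (node together with folded saddle), or $\det A_{EQ}<0$ with $\det A_{sEQ}>0$ (saddle together with folded node). The main technical obstacle is the consistent sign bookkeeping when expressing $g_e'(0)$ and $f_{2,s}'(0)$ through $\Delta_4$ via Cramer's rule and cofactor expansion; once this is carried out carefully the sharp cancellation $\det A_{EQ}\cdot\det A_{sEQ}=-\Delta_4^{\,2}\alpha^2+O(\alpha^3)$ emerges, and the opposite-type dichotomy asserted in the Lemma is automatic.
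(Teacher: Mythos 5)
Your proposal is correct and follows essentially the same route as the paper's proof: apply the Implicit Function Theorem to $\{f_1=f_2=0\}$ and $\{f_1=g=0\}$ using $\Delta_1\ne0$ and $\Delta_2\ne0$, then show $\det A_{EQ}=\Delta_4\,\alpha+O(\alpha^2)$ and $\det A_{sEQ}=-\Delta_4\,\alpha+O(\alpha^2)$ to obtain the opposite-sign dichotomy, with $f_{1x}(0,0,0)\ne0$ guaranteeing real eigenvalues (the paper phrases this via the eigenvalues $\lambda_1=f_{1x}+O(\alpha)$, $\lambda_2=O(\alpha)$ rather than your explicit trace--discriminant inequality, but the content is identical). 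The sign bookkeeping you flag as the main obstacle does work out exactly as you predict.
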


\begin{proof}

First of all, we note that equations $f_1 = f_2 = g = 0$ have a solution $x = y = \a = 0$, and by the transversality condition $\Del_4 \neq 0$
no other solution exists nearby, thus for small $\a \neq 0$ no other singular saddle-nodes of type II exist in some small neighbourhood of the origin.

Condition $\Del_1 \ne 0$ implies that for all small $\a$ equations $f_1 = f_2 = 0$ have a unique equilibrium solution
\begin{equation}\label{eq:eq}
\displaystyle  (x_e(\a), y_e(\a)) = \frac{1}{\Del_1} \left(
\det \left. \frac{\partial (f_1, f_2)} {\partial(y, \a)}\right|_{(0,0,0)}, 
-\det \left.\frac{\partial (f_1, f_2)} {\partial(x, \a)}\right|_{(0,0,0)}
\right)\a + O(\a^2).
\end{equation}
Similarly, as $\Del_2 \neq 0$, equations $f_1 = g = 0$ have a unique singular equilibrium solution:
\begin{equation}\label{eq:seq}
\displaystyle  (x_s(\a), y_s(\a)) = \frac{1}{\Del_2} \left(
\det \left. \frac{\partial (f_1, g)} {\partial(y, \a)}\right|_{(0,0,0)}, 
-\det \left.\frac{\partial (f_1, g)} {\partial(x, \a)}\right|_{(0,0,0)}
\right)\a + O(\a^2).
\end{equation}

At the bifurcation moment $\a = 0$ the linearization matrix (\ref{linear_s_eq}) at the singular saddle node has eigenvalues $\lambda_1 = f_{1x} \neq 0$ and $\lambda_2 = 0$. Upon a small perturbation, linearization matrices (\ref{linear_eq}) and (\ref{linear_s_eq}) of, respectively, the equilibrium and the singular equilibrium, will have real  eigenvalues $\lambda_1 = f_{1x}(0, 0, 0) + O(\a)$ and $\lambda_2 = O(\a)$, so they have either node or saddle type. The sign of the first eigenvalue at each point is given for small $\a$ by the sign of derivative $f_{1x}(0, 0, 0)$. 
The product of eigenvalues is equal to the determinant of the linearization matrix. At the equilibrium the determinant is:
$$
\det A_{EQ} = g(x^e(\a), y^e(\a), \a) (\Del_1 + O(\a)) = \Del_4 \a + O(\a^2),
$$
and at the singular equilibrium:
$$
\det A_{sEQ} = f_2(x^s(\a), y^s(\a), \a) (\Del_2 + O(\a)) = -\Del_4 \a + O(\a^2).
$$

Thus, the bifurcation occurs in the following way:
\begin{itemize}
    \item[] When $\Del_4 \a > 0$, the equilibrium is a node (stable if $f_{1x} < 0$ and unstable if $f_{1x} > 0$), and the singular equilibrium is a folded saddle;
    \item[] When $\Del_4 \a < 0$, the equilibrium is a saddle, and the singular equilibrium is a folded node.
\end{itemize}

\end{proof}

\begin{ex}\rm
Consider 
\begin{equation}\label{eq:2D_nf_ex}
\begin{cases}
(x - x^3)\dot x=y-x+\a\\
\dot y=y
\end{cases}.
\end{equation}
Then, for $\a = 0$ we have $f_1 = f_2 = g = 0$, i.e. a singular saddle-node of type II at the origin $(x, y) = (0, 0)$. By formulas (\ref{eq:determinants}), it follows that
$f_{1x} = -1$, $g_x = 1$, $\Del_1 = -1$, $\Del_2 = -1$ and $\Del_4=-1$, the conditions (\ref{noSingFD}) are fulfilled.
For small $\a$ this bifurcation point unfolds into an equilibrium EQ: $(x^e, y^e) = (\a, 0) + O(\a^2)$ and a singular equilibrium sEQ: $(x^s, y^s) = (0, -\a) + O(\a^2)$.
For $ \a > 0$ EQ is a stable node and sEQ is a folded saddle.
For $\a < 0$, EQ is a saddle and sEQ is a folded node.

\end{ex}

\subsection{L4: Cubic fold}
The cubic fold bifurcation occurs at point $M(0, 0)$ for $\a = 0$ if conditions $g = g_x = g_{xx} = 0$ are fulfilled at it. In addition, to keep the codimension of the problem equal to one and to construct a transversal parametric family, we assume the following inequalities to hold:
\begin{equation}\label{eq:cubic_fold_gen}
    f_1 \neq 0, \q g_y \neq 0, \q g_{xxx} \neq 0, \q \Del_3 \neq 0
\end{equation}

\begin{lem}\label{lem:cubic_fold}
{Assume that genericity (codimension one + transversality) conditions (\ref{eq:cubic_fold_gen}) are fulfilled at a cubic fold point $M$. Then in a generic unfolding point $M$ splits into a pair of simple folds, or disappears. 
}
\end{lem}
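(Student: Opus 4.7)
The plan is to reduce the pair of fold-defining equations $g=0$, $g_x=0$ to a single scalar equation in $(x,\alpha)$ and then recognize it as a one-variable fold in disguise, whose unfolding is well understood.

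First I would use the hypothesis $g_y(0,0,0)\neq 0$ together with the Implicit Function Theorem to solve $g(x,y,\alpha)=0$ locally for $y=\psi(x,\alpha)$ with $\psi(0,0)=0$. Substituting this into the remaining condition yields a scalar function $h(x,\alpha):=g_x(x,\psi(x,\alpha),\alpha)$ whose zero set is in one-to-one correspondence with the set of fold points near the origin. Differentiating the identity $g(x,\psi(x,\alpha),\alpha)\equiv 0$ once and twice in $x$ at $(0,0)$ gives $\psi_x(0,0)=0$ and $\psi_{xx}(0,0)=0$ (since $g_x$ and $g_{xx}$ vanish at the origin), and differentiating in $\alpha$ gives $\psi_\alpha(0,0)=-g_\alpha/g_y$.

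Next I would use these to compute the needed derivatives of $h$. One finds
\[
h(0,0)=g_x(0,0,0)=0,\qquad h_x(0,0)=g_{xx}(0,0,0)+g_{xy}(0,0,0)\psi_x(0,0)=0,
\]
while
\[
h_{xx}(0,0)=g_{xxx}(0,0,0)+g_{xy}(0,0,0)\psi_{xx}(0,0)=g_{xxx}(0,0,0)\neq 0,
\]
and
\[
h_\alpha(0,0)=g_{x\alpha}(0,0,0)+g_{xy}(0,0,0)\psi_\alpha(0,0)=\frac{g_y g_{x\alpha}-g_{xy}g_\alpha}{g_y}\bigg|_{(0,0,0)}=\frac{\Delta_3}{g_y(0,0,0)}\neq 0.
\]
Thus $h$ has the Taylor expansion
\[
h(x,\alpha)=\tfrac{1}{2}g_{xxx}(0,0,0)\,x^2+\frac{\Delta_3}{g_y(0,0,0)}\alpha+O(x^3,x\alpha,\alpha^2),
\]
which is precisely the classical scalar fold. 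Depending on the sign of the product $g_{xxx}\cdot\Delta_3/g_y$, the equation $h=0$ has either no real solutions or exactly two simple solutions $x=x_\pm(\alpha)$ for small $\alpha$ on one side of zero, obtained by the standard fold normal form (or directly by Morse/IFT applied to $h$ after the substitution $x=\sqrt{|\alpha|}\,\xi$).

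Finally I would verify that each surviving solution really is a simple fold in the sense of Definition of Section~\ref{subsubsec:simple_fp}. At every solution $h(x,\alpha)=0$ we automatically have $g_x=0$ (by definition of $h$) and $g=0$ (by definition of $\psi$); the conditions $g_y\neq 0$ and $f_1\neq 0$ persist by continuity from $(0,0,0)$; and $g_{xx}$ at the new fold equals $h_x$ evaluated there (because $\psi_x=0$ whenever $g_x=0$), which is nonzero precisely because $h_{xx}(0,0)\neq 0$ makes the two roots of $h(\cdot,\alpha)$ simple with opposite signs of $h_x$. The opposite signs of $h_x=g_{xx}$ at the two roots show that the two new folds have opposite convexities, completing the statement. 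I expect no real obstacle beyond the bookkeeping of the mixed partial derivatives; the one place where one must be careful is verifying that $h_\alpha(0,0)$ is exactly $\Delta_3/g_y$, which is what links the transversality hypothesis $\Delta_3\neq 0$ to the scalar unfolding.
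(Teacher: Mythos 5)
Your argument is correct, but it follows a genuinely different route from the paper. The paper attacks the degenerate system $g=g_x=0$ head-on: since the Jacobian $\partial(g,g_x)/\partial(x,y)$ vanishes at $M$, it posits fractional-power (Puiseux) expansions $x\sim\beta_1\alpha^{\delta_1}$, $y\sim\beta_2\alpha^{\delta_2}$, balances orders to force $\delta_1=1/2$, $\delta_2=1$, and solves for $\beta_1^2$ and $\beta_2$ (equations (\ref{c_fold_1})--(\ref{c_fold_4})), concluding that two folds exist on one side of $\alpha=0$ and none on the other. You instead eliminate $y$ first via the Implicit Function Theorem using $g_y\neq 0$, reducing the fold locus to the zero set of the scalar function $h(x,\alpha)=g_x(x,\psi(x,\alpha),\alpha)$, and then recognize $h(0,0)=h_x(0,0)=0$, $h_{xx}(0,0)=g_{xxx}\neq 0$, $h_\alpha(0,0)=\Del_3/g_y\neq 0$ as a classical one-dimensional fold; your derivative computations check out (the $\psi_x$-terms you drop in $h_{xx}$ vanish at the origin anyway). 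Your reduction buys several things: it avoids having to justify the choice of exponents in the Puiseux ansatz (the paper's ``for them to be solvable it is required that $\delta_1=1/2$, $\delta_2=1$'' is the informal step your IFT argument replaces), and, unlike the paper's proof, you explicitly verify that the two emerging folds are \emph{simple} in the sense of Subsection~\ref{subsubsec:simple_fp} -- namely that $g_{xx}=h_x\neq 0$ at each, with opposite signs, which also recovers the opposite-convexity claim made in the description of bifurcation L4. The paper's computation, for its part, delivers the explicit leading-order positions of both folds in both coordinates. One small point of comparison: your leading-order relation reads $x^2\approx -\tfrac{2\Del_3}{g_yg_{xxx}}\alpha$, whereas (\ref{c_fold_3}) has the opposite sign; re-deriving the paper's order-$\alpha$ balance reproduces your sign, and in any case the discrepancy is immaterial to the statement of Lemma~\ref{lem:cubic_fold}, which only asserts ``two simple folds or none.''
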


\begin{proof}
For system of equations $g = g_x = 0$ the Implicit Function Theorem are not fulfilled, because $\displaystyle \left.\frac{\partial(g, g_x)}{\partial(x, y)}\right|_{(0,0,0)} = 0$. Then we look for a solution of this system in the form 
$$
x = \beta_1 \alpha^{\delta_1} + o(\alpha^{\delta_1}), \q y = \beta_2 \alpha^{\delta_2} + o(\alpha^{\delta_2})
$$
for positive $\alpha$, and
$$
x = \beta_1 (-\alpha)^{\delta_1} + o((-\alpha)^{\delta_1}), \q y = \beta_2 (-\alpha)^{\delta_2} + o((-\alpha)^{\delta_2}) 
$$
for negative $\alpha$.
The equations take form 
\begin{equation}\label{c_fold_1}
\begin{array}{l}
\displaystyle
0 = g_y \beta_2 \alpha^{\delta_2} + g_\alpha \alpha + 
\frac{1}{6}g_{xxx}\beta_1^3 \alpha^{3\delta_1} + h.o.t.\\ 
\displaystyle
0 = g_{xy} \beta_2 \alpha^{\delta_2} + g_{x \alpha}\alpha +
\frac{1}{2} g_{xxx} \beta_1^2 \alpha^{2\delta_1} + h.o.t.
\end{array}
\end{equation}
and respectively
\begin{equation}\label{c_fold_2}
\begin{array}{l}
\displaystyle
0 = g_y \beta_2 (-\alpha)^{\delta_2} - g_\alpha (-\alpha) + 
\frac{1}{6}g_{xxx}\beta_1^3 (-\alpha)^{3\delta_1} + h.o.t.\\ 
\displaystyle
0 = g_{xy} \beta_2 (-\alpha)^{\delta_2} - g_{x \alpha}(-\alpha) +
\frac{1}{2} g_{xxx} \beta_1^2 (-\alpha)^{2\delta_1} + h.o.t.
\end{array}
\end{equation}
For them to be solvable it is required that $\delta_1 = 1/2$, $\delta_2 = 1$. Then for $\alpha > 0$ from (\ref{c_fold_1}) we have
\begin{equation}\label{c_fold_3}
\displaystyle  \beta_1^2 = \frac{2\Del_3}{g_y g_{xxx}}, \q 
\beta_2 = -\frac{g_\alpha}{g_y}
\end{equation}
and for $\alpha < 0$ from (\ref{c_fold_2}):
\begin{equation}\label{c_fold_4}
\displaystyle  \beta_1^2 = -\frac{2\Del_3}{g_y g_{xxx}}, \q 
\beta_2 = \frac{g_\alpha}{g_y}.
\end{equation}

Then, two simple folds exist for perturbations $\displaystyle \frac{2\Del_3}{g_y g_{xxx}} \alpha > 0$, and the cubic fold disappears, and no folds exist locally, when $\displaystyle \frac{2\Del_3}{g_y g_{xxx}} \alpha < 0$.

\end{proof}

\subsection{L5.  Singular equilibrium-fold}
The singular equilibrium-fold bifurcation occurs at point $M(0, 0)$ for $\a = 0$ if conditions $g = g_x = f_1 = 0$ are fulfilled at it. In addition, to keep the codimension of the problem equal to one and to construct a transversal parametric family, we assume the following inequalities to hold:
\begin{equation}\label{eq:eq_fold_gen}
    g_y \neq 0, \q g_{xx} \neq 0, \q \Del_2 \neq 0, \q \Del_3 \neq 0, \q \Del_5 \neq 0
\end{equation}

\begin{lem}\label{L:fold_sEQ}
{Assume that genericity (codimension one + transversality) conditions (\ref{eq:eq_fold_gen}) are fulfilled at a singular equilibrium-fold point $M$. Then in a generic unfolding point $M$ splits into a  simple folds and a simple singular equilibrium. 
}
\end{lem}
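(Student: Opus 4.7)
The plan is to invoke the Implicit Function Theorem twice---once to continue the fold locus $\{g = 0,\ g_x = 0\}$ and once to continue the singular-equilibrium locus $\{g = 0,\ f_1 = 0\}$---obtaining two smooth branches emanating from the origin, and then to use $\Del_5 \neq 0$ as the transversality condition that both separates the two branches for $\a \neq 0$ and enforces their respective simplicity.

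First I would solve $g = g_x = 0$ for $(x,y)$ as a function of $\a$: its Jacobian with respect to $(x,y)$ at the origin equals
$$\det\begin{pmatrix}g_x & g_y\\ g_{xx} & g_{xy}\end{pmatrix}\bigg|_{(0,0,0)} = -g_y\, g_{xx},$$
which is nonzero by (\ref{eq:eq_fold_gen}). Hence IFT yields a unique smooth fold branch $(x_f(\a), y_f(\a))$ with $(x_f(0),y_f(0))=(0,0)$; a short implicit differentiation gives the tangent
$$\dot y_f(0) = -g_\a/g_y, \qquad \dot x_f(0) = -\Del_3/(g_y g_{xx}),$$
which is nonzero thanks to $\Del_3\neq 0$. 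Similarly, $\Del_2\neq 0$ allows IFT applied to $g = f_1 = 0$ to yield a unique smooth singular-equilibrium branch $(x_s(\a), y_s(\a))$ through the origin, precisely in the form (\ref{eq:seq}).

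Second, I would verify simplicity of each branch for small $\a\neq 0$. Substituting the tangents from above into $f_1$ along the fold branch and into $g_x$ along the singular-equilibrium branch, I expect to obtain
$$f_1(x_f(\a), y_f(\a), \a) = -\frac{\Del_5}{g_y\, g_{xx}}\,\a + O(\a^2),\qquad g_x(x_s(\a), y_s(\a), \a) = \frac{\Del_5}{\Del_2}\,\a + O(\a^2),$$
so $\Del_5\neq 0$ simultaneously ensures $f_1\neq 0$ along the fold branch (making every perturbed fold simple, since $g_y, g_{xx} \neq 0$ persist by continuity) and $g_x\neq 0$ along the singular-equilibrium branch. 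The remaining simplicity requirements for the singular equilibrium ($f_2\neq 0$, $\Del_2\neq 0$, hyperbolicity of $A_{sEQ}$) follow by continuity: at $\a = 0$ the matrix $A_{sEQ}$ in (\ref{linear_s_eq}) is upper-triangular with eigenvalues $f_{1x}$ and $g_y f_2$, both real and nonzero by the L5 hypotheses, so the perturbed singular equilibrium is a hyperbolic folded node or folded saddle according to the sign of $f_{1x}\,g_y f_2$.

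The main obstacle I anticipate is the bookkeeping in the two implicit differentiations that identify the coefficient of $\a$ as $\Del_5$ divided by a nonzero factor. Concretely, after computing $\dot x_f, \dot y_f$ (respectively $\dot x_s, \dot y_s$) from the defining system, one substitutes into $f_{1x}\dot x_f + f_{1y}\dot y_f + f_{1\a}$ (respectively $g_{xx}\dot x_s + g_{xy}\dot y_s + g_{x\a}$) and must recognize the resulting linear combination as the cofactor expansion of $\Del_5$ along the row containing $g_x|_{0}=0$. Once this algebraic identification is made, the rest reduces to routine IFT and continuity, and the fact that $(x_f(\a),y_f(\a))$ and $(x_s(\a),y_s(\a))$ coincide only at $\a=0$ is automatic: they lie on the fold locus and the singular-equilibrium locus respectively, and these loci intersect transversally at $M$ by $\Del_5\neq 0$.
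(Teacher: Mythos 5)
Your proposal is correct and follows essentially the same strategy as the paper: apply the Implicit Function Theorem separately to $g=g_x=0$ and to $g=f_1=0$ to obtain the fold and singular-equilibrium branches, and use $\Del_5\neq 0$ to conclude that the two branches meet only at $\a=0$. In fact your write-up is more complete than the paper's proof, which stops after producing the two branches and never explicitly verifies that the perturbed fold satisfies $f_1\neq 0$ and that the perturbed singular equilibrium satisfies $g_x\neq 0$, $f_2\neq0$ and hyperbolicity of $A_{sEQ}$; your computation of $f_1(x_f(\a),y_f(\a),\a)=-\tfrac{\Del_5}{g_yg_{xx}}\a+O(\a^2)$ (which I checked via the cofactor expansion of $\Del_5$ along the row containing $g_x=0$) and the eigenvalue argument for $A_{sEQ}$ supply exactly the missing steps. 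One further point in your favour: the Jacobian governing the fold branch is indeed $\det\frac{\partial(g,g_x)}{\partial(x,y)}=-g_yg_{xx}\neq0$, as you state, whereas the paper's formula (\ref{eq_fold_fold}) places $\Del_3=\det\frac{\partial(g,g_x)}{\partial(y,\a)}$ in the denominator, which is inconsistent with Cramer's rule for solving in $(x,y)$ (compare with (\ref{eq_fold_sEQ}), where the denominator $\Del_2$ is correctly the $(x,y)$-Jacobian); in your version $\Del_3\neq0$ plays its natural role as the condition $\dot x_f(0)\neq0$, i.e.\ that the fold actually moves with the parameter.
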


\begin{proof}
By the transversality condition $\Del_5 \neq 0$, system of equations $g = g_x = f_1 = 0$ has locally no solutions for $\a \neq 0$, then the singular equilibrium-fold disappears under such small perturbations.

The genericity condition $\Del_2 \neq 0$ implies that system of equations $f_1 = g = 0$ has locally a unique singular equilibrium solution for small $\alpha$:
\begin{equation} \label{eq_fold_sEQ}
\displaystyle  (x_s(\a), y_s(\a)) = \frac{1}{\Del_2} \left(
\det \left. \frac{\partial (f_1, g)} {\partial(y, \a)}\right|_{(0,0,0)}, 
-\det \left.\frac{\partial (f_1, g)} {\partial(x, \a)}\right|_{(0,0,0)}
\right)\a + O(\a^2).
\end{equation}

Also, the genericity condition $\Del_3 \neq 0$ implies that
a unique fold solution of system $g = g_x = 0$ exists for small $\a$:
\begin{equation} \label{eq_fold_fold}
\displaystyle  (x_f(\a), y_f(\a)) = \frac{1}{\Del_3} \left(
\det \left. \frac{\partial (g, g_x)} {\partial(y, \a)}\right|_{(0,0,0)}, 
-\det \left.\frac{\partial (g, g_x)} {\partial(x, \a)}\right|_{(0,0,0)}
\right)\a + O(\a^2).
\end{equation}

\end{proof}

 \section*{Funding}
This paper is a contribution to the project M7 (Dynamics of Geophysical Problems in Turbulent Regimes) of the Collaborative Research Centre TRR 181 ``Energy Transfer in Atmosphere and Ocean'' funded by the Deutsche Forschungsgemeinschaft (DFG, German Research Foundation) -- Projektnummer 274762653. The paper is also supported by the
grant of the Russian Science
Foundation 19-11-00280.

\end{document}